\newcommand{\ob}{\pi}
\newcommand{\R}{\mathbb{R}}
\newcommand{\CP}{{\mathbb{C}}P}
\newcommand{\RP}{\mathbb{R}P}
\newcommand{\RA}{\mathbb{R}A}
\newcommand{\RC}{\mathbb{R}C}
\newcommand{\Z}{\mathbb{Z}}
\newcommand{\RMor}{Mor_{\R}}
\newcommand{\OC}{\mathcal{O}_{\CP^1}}
\newcommand{\OP}{\mathcal{O}_{C}}
\newcommand{\Nu}{\mathcal{N}_u}
\newcommand{\Nud}{\mathcal{N}_{u_0}}
\newcommand{\Nua}{\mathcal{N}_{u_1}}
\newcommand{\Nui}{\mathcal{N}_{u,-\ob^{2d}_i(\underline{z})}}
\newcommand{\Nuid}{\mathcal{N}_{u_0,-\ob^{2d}_i(z^0)}}
\newcommand{\Nuidtd}{\mathcal{N}_{u_0,-\ob^{2d}_i(\tilde{\underline{z}}^0)}}
\newcommand{\ev}{ev}
\newcommand{\Rev}{ev_\mathbb{R}}
\newcommand{\evl}{ev}
\newcommand{\K}{\mathcal{K}^d}
\newcommand{\Kl}{\mathcal{K}_{\ell}^{d}}
\newcommand{\RKl}{\mathbb{R}\mathcal{K}_{\ell}^{d}}
\newcommand{\D}{\mathcal{D}^d_{d',\ell}}
\newcommand{\Bl}{\mathfrak{Bl}}
\newcommand{\Mc}{\overline{\mathcal{M}}_{2d}^d(\CP^3)}
\newcommand{\Mck}{\overline{\mathcal{M}}_{k}^d(\CP^3)}
\newcommand{\Mco}{\overline{\mathcal{M}}_{0}^d(\CP^3)}
\newcommand{\Mless}{\overline{\mathcal{M}}_{2d-1}^d(\CP^3)}
\newcommand{\TMc}{T_{\overline{\mathcal{M}}_{2d}^d(\CP^3)^*}}
\newcommand{\TMmc}{T_{\map}\overline{\mathcal{M}}_{2d}^d(\CP^3)}
\newcommand{\RM}{\mathbb{R}\mathcal{M}_{2d}^d(\CP^3)}
\newcommand{\RMc}{\mathbb{R}\overline{\mathcal{M}}_{2d}^d(\CP^3)}
\newcommand{\RMl}{\mathbb{R}\mathcal{M}_{2d+\ell}^d(\CP^3)}
\newcommand{\RMcl}{\mathbb{R}\overline{\mathcal{M}}_{2d+\ell}^d(\CP^3)}
\newcommand{\TRMc}{T_{\mathbb{R}\overline{\mathcal{M}}_{2d}^d(\CP^3)^*}}
\newcommand{\map}{(C,\underline{z},u)}
\newcommand{\mapr}{(C_{\star},\underline{z}^{\star},{u}_{\star})}
\newcommand{\mapd}{(C_0,\underline{z}^0,{u}_0)}
\newcommand{\mapdtd}{(C_0,\underline{\tilde{z}}^0,{u}_0)}
\newcommand{\mapa}{(C_1,\underline{z}^1,{u}_1)}
\newcommand{\mapatd}{(C_1,\underline{\tilde{z}}^1,{u}_1)}
\newcommand{\mapt}{(C_t,\underline{z}^t,{u}_t)}
\newcommand{\mapttd}{(C_t,\underline{\tilde{z}}^t,{u}_t)}
\newcommand{\maprtd}{(C_{\star},\underline{\tilde{z}}^{\star},{u}_{\star})}
\newcommand{\TRMmc}{T_{\map}\RMc}
\newcommand{\TRMd}{T_{\gamma(0)}\RM}
\newcommand{\TRMdtd}{T_{\tilde{\gamma}(0)}\RM}
\newcommand{\TRMa}{T_{\gamma(1)}\RM}
\newcommand{\basd}{(e_1^0,\dots,e_{2d}^0,f_1^0,g_1^0,\dots,f_{2d}^0,g_{2d}^0)}
\newcommand{\basa}{(e_1^1,\dots,e_{2d}^1,f_1^1,g_1^1,\dots,f_{2d}^1,g_{2d}^1)}
\newcommand{\bast}{(e_1^t,\dots,e_{2d}^t,\tilde{f}_1^t,\tilde{g}_1^t,\dots,\tilde{f}_{2d}^t,\tilde{g}_{2d}^t)}
\DeclareMathOperator{\Ker}{Ker}
\theoremstyle{plain}
\newtheorem{theo}{Théorème}
\newtheorem{lemm}{Lemme}
\newtheorem{prop}{Proposition}
\theoremstyle{definition}
\newtheorem{defi}{Définition}
\newtheorem*{nota}{Notation}
\theoremstyle{remark}
\newtheorem*{rema}{Remarque}
\begin{document}
\selectlanguage{francais}
\author{Nicolas Puignau\footnote{Pendant la préparation de cet article, l'auteur fut partiellement soutenu par une bourse de recherche de la DAAD à l'université Technique de Berlin en Allemagne.}\\IMPA\\Rio de Janeiro, Brésil}
\date{}
\title{Sur la première classe de Stiefel-Whitney de l'espace des applications stables réelles vers l'espace projectif}
\maketitle
\selectlanguage{english}
\begin{abstract}
Moduli space of genus zero stable maps to the projective three-space naturally carries a real structure such that the fixed locus is a moduli space for real rational spatial curves with real marked points. The latter is a normal projective real variety. The singular locus being in codimension at least two, a first Stiefel-Whitney class is well defined. In this paper, we determine a representative for the first Stiefel-Whitney class of such real space when the evaluation map is generically finite. This can be done by means of Poincaré duals of boundary divisors.
\end{abstract}
\selectlanguage{francais}

\setcounter{tocdepth}{3}
\setcounter{secnumdepth}{3}
\tableofcontents

\section{Introduction}
Nous nous intéressons aux courbes rationnelles réelles pointées de l'espace projectif $\CP^3$. Dans \cite{W1} Welschinger définit des invariants pour les variétés convexes de dimension trois équipées d'une structure réelles. Ces derniers sont déterminés via la géométrie énumérative des courbes rationnelles réelles de la variété. Pour un entier $d$ supérieur ou égal à trois et une configuration générique de $2d$ points dans l'espace complexe projectif $\CP^3$ les courbes rationnelles de degré $d$ qui passent par ces points sont en nombre fini. Ce nombre est indépendant de la configuration choisie parce que le corps des nombres complexes est algébriquement clos, c'est un invariant de Gromov-Witten de $\CP^3$. Le m\^eme problème sur le corps des nombres réels dépend de la configuration de points choisie. Dans ce contexte, un invariant de Welschinger est défini comme la somme algébrique sur les courbes réelles de la collection, chacune comptée avec un signe approprié qui ne dépend que de la courbe plongée. Cette somme est alors indépendante de la configuration de points générique (voir \cite{W1}).

Soit $\Mc$ l'espace des modules des applications stables en genre zéro (omis dans la notation) de degré $d$ dans $\CP^3$ avec $2d$ points marqués (voir \cite{F-P}). L'invariant de Gromov-Witten, décrit précédemment, n'est autre que le degré du morphisme d'évaluation $\ev : \Mc \to (\CP^3)^{2d}$ qui envoie une courbe marquée sur la configuration des marquages.
Il existe une contrepartie réelle de cet espace, notée $\RMc$, qui est une variété projective normale et un espace de paramètres pour les courbes spatiales rationnelles réelles de degré $d$ munies de $2d$ points marqués réels (voir \cite{W1}). L'invariant de Welschinger peut également s'interpréter comme le degré du morphisme d'évaluation $\Rev$ entre les espaces $\RMc$ et $(\RP^3)^{2d}$. En fait, ce degré n'est bien défini que parce que $\RMc$ peut-\^etre orienté en dehors du lieu où le jacobien du morphisme d'évaluation est de corang au moins deux. Cette propriété est au coeur de l'existence et de la définition des invariants de Welschinger. Il en résulte que la classe principale de $\RMc$ a été grossièrement caractérisée par Welschinger (voir \cite{W2}) dans une approximation suffisante pour construire ces invariants. Le travail actuel accomplit cette t\^ache en donnant une représentation précise de la première classe de Stiefel-Whitney de $\RMc$.

Pour $A\sqcup B$ une partition de $[2d]=\{1,\dots,2d\}$ et des entiers $d'$ et $d''$ tels que $d'+d''=d$ est défini un diviseur irréductible $D(A,B;d',d'')$ de $\Mc$ comme la fermeture du lieu des applications stables $\map$ telles que
\begin{enumerate}
\item $C=C_A\cup C_B$ est l'union de deux courbes $C_A$ et $C_B$ de genre zéro sécantes transversalement en un point unique ;
\item les marquages indexés par $A$ (resp. $B$) appartiennent à $C_A$ (resp. $C_B$) ;
\item $u(C_A)$ est de degré $d'$ et $u(C_B)$ de degré $d''$.
\end{enumerate}
La frontière de $\Mc$ est la réunion de tous ces diviseurs (voir \cite{F-P}).
Soit maintenant $\map$ une application stable de $\Mc$ telle que $C=\CP^1$ et $u$ soit une immersion. Le faisceau normal $\Nu$ défini comme le quotient de la séquence $\xymatrix{0 \ar[r] & T_C \ar[r]^-{du} & u^*T_{\CP^3} }$ est le faisceau d'un fibré vectoriel de rang deux sur $\CP^1$. Il est donc décomposable en somme directe de deux fibrés en droites. Lorsque les fibrés linéaires de la décomposition sont isomorphes, on dit que la courbe définie par $\map$ est \emph{équilibrée}. Notons $K^d \subset \Mc$ la réunion des diviseurs irréductibles de la frontière dont l'image par le morphisme d'évaluation est de codimension au moins deux et du lieu des courbes non équilibrées $\map$ telles que $h^1\left(C;\Nu \otimes \OP(-\underline{z})\right)$ soit au moins deux. L'image de la partie réelle $\R K^d$ est de codimension au moins deux dans $(\RP^3)^{2d}$.

Une orientation sur $\RMc$ est définie partout en dehors de $\R K^d$ ce qui permet d'identifier l'invariant de Welschinger et le degré du morphisme d'évaluation réel (\textit{cf}. proposition 4.2 de \cite{W2}). En conséquence, la première classe de Stiefel-Whitney de $\RMc$ admet un représentant dual inclus dans $\R K^d$ et peut donc s'écrire en termes de composantes pseudo-connexes de celui-ci (par composante pseudo-connexe on entend l'adhérence d'une composante connexe du complémentaire du lieu singulier dans le lieu réel de $K^d$). Nous déterminons exactement quelles sont les composantes pseudo-connexes de $\R K^d$ qui participent effectivement à représenter la première classe de Stiefel-Whitney de $\RMc$. Il advient que les composantes qui ne sont pas dans la frontière ne participent pas car elles sont de codimension trop grande (section \ref{reduc}). De plus, la participation d'une composante pseudo-connexe de la fontière réelle ne dépend que de la donnée du diviseur irréductible qui la contient. Pour une classe $[D]$ dans $H_{6d-1}(\RMc,\Z/2\Z)$, on note $[D]^{\vee}$ son image par le morphisme $H_{6d-1}(\RMc,\Z/2\Z) \to H^{1}(\RMc,\Z/2\Z)$. Cet article vise à démontrer le théorème suivant.

\begin{theo}\label{theo}
La première classe de Stiefel-Whitney de $\RMc$ s'écrit
$$w_1(\RMc)=\sum_{\substack{0 < \ell < 2d}}\ell.[\RKl]^{\vee}$$
où $\R\Kl$ est la partie réelle de la réunion des diviseurs $D(A,B,d',d'')$ tels que $|A|=2d'+\ell$.
\end{theo}

Dans la démonstration nous remarquons d'abord (section \ref{reduc}) que seules les composantes de la frontière sont de codimension assez petite pour représenter la première classe de Stiefel-Whitney. Ensuite nous procédons à une étude locale des orientations au voisinage d'un composante pseudo-connexe quelconque de $\R K^d$. Pour cela on transporte une orientation locale le long d'un chemin générique transverse à la composante et on compare cette orientation à celle issue d'un chemin de comparaison qui lui se situe dans le lieu régulier du morphisme d'évaluation (section \ref{trav}). Cette approche locale est possible du fait que $\RMc$ soit orienté en dehors de $\R K^d$ en relevant une orientation de $\RP^3$ par le morphisme d'évaluation réel suivant les signes de Welschinger. Donc pour comparer les orientations locales, on les compare successivement au tiré-en-arrière d'une orientation de référence sur $\RP^3$ (section \ref{evalbase}).\\

\noindent{\bf Remerciements.} Je remercie le relecteur de cet article pour m'avoir révéler un certain nombre d'inexactitudes contenues dans sa version préliminaire.

\section{Démonstration}

\subsection{Réduction à la frontière}\label{reduc}
Nous réduisons notre étude aux composantes pseudo-connexes contenues dans certains diviseurs irréductibles de la frontière.
\begin{prop}\label{reducfront}
La première classe de Stiefel-Whitney de $\RMc$ est représentable (dualement) en termes de composantes pseudo-connexes de la partie réelle des diviseurs de la frontière $D(A,B,d',d'')$ tels que $|A|\neq2d'$.
\end{prop}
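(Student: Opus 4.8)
\emph{Esquisse de la preuve.} On procède par localisation de la première classe de Stiefel--Whitney. Comme une orientation de $\RMc$ est définie en dehors de $\R K^d$, le fibré tangent de la partie lisse de $\RMc$ y est trivialisé en tant que fibré orienté, de sorte que $w_1(\RMc)$ s'annule en restriction au complémentaire de $\R K^d$ dans le lieu lisse ; la suite exacte de cohomologie à supports montre alors que $w_1(\RMc)$ provient d'une classe à support dans $\R K^d$. Puisque $w_1$ est de degré un et qu'un fermé de codimension au moins deux ne porte aucune classe de cohomologie à support en degré un, seules les composantes de $\R K^d$ de codimension un dans $\RMc$ peuvent contribuer. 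Il suffit donc d'établir que $\R K^d$ est la réunion des parties réelles des diviseurs de la frontière $D(A,B,d',d'')$ tels que $|A|\neq 2d'$ --- qui sont de codimension un --- et d'un fermé de codimension au moins deux : l'isomorphisme de Thom à coefficients $\Z/2$ (le fibré normal d'une hypersurface est une droite, donc automatiquement $\Z/2$-orienté) donne alors que $w_1(\RMc)$ est une combinaison $\Z/2$-linéaire des classes $[\RC]^{\vee}$, où $\RC$ parcourt les composantes pseudo-connexes de ces parties réelles.

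Deux points restent à vérifier. D'abord, le lieu des applications stables $\map$ non équilibrées vérifiant $h^1(C;\Nu\otimes\OP(-\underline{z}))\geq 2$ est de codimension au moins deux dans $\RMc$. En effet, si $\Nu\cong\OP(a)\oplus\OP(b)$ avec $a-b=\delta\geq 0$, on a $a+b=\deg\Nu=4d-2$ et un calcul direct donne $h^1(C;\Nu\otimes\OP(-\underline{z}))=\delta/2$ ; la condition $h^1\geq 2$ équivaut donc à $\delta\geq 4$. Or, pour $d\geq 3$, une courbe rationnelle générique de degré $d$ est équilibrée, et par semi-continuité supérieure du type de décomposition du fibré normal dans la famille universelle, le lieu $\{\delta\geq 2\}$ est un fermé de codimension au moins un dont $\{\delta=2\}$ est un ouvert dense ; ainsi $\{\delta\geq 4\}$ est un fermé propre de $\{\delta\geq 2\}$, donc de codimension au moins deux dans $\RMc$ --- en fait de codimension $\delta-1\geq 3$, par la théorie des déformations du fibré normal. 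Ensuite, un diviseur irréductible de la frontière $D(A,B,d',d'')$ est contenu dans $K^d$ si et seulement si $|A|\neq 2d'$ : en réalisant $D(A,B,d',d'')$ comme le produit fibré $\overline{\mathcal{M}}_{A\cup\{\ast\}}^{d'}(\CP^3)\times_{\CP^3}\overline{\mathcal{M}}_{B\cup\{\ast\}}^{d''}(\CP^3)$ et en calculant la dimension de l'image du morphisme d'évaluation aux points marqués de $A\cup B$, on trouve que cette image est de codimension exactement un dans $(\CP^3)^{2d}$ lorsque $|A|=2d'$ --- la position du point nodal étant alors génériquement déterminée, à un choix fini près, par la configuration des marquages, de sorte que l'évaluation restreinte à $D(A,B,d',d'')$ est génériquement finie --- et de codimension au moins deux sinon. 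Ce décompte peut réclamer une discussion selon les valeurs de $d'$ et $d''$, notamment en petit degré.

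Le point le plus délicat est la minoration de la codimension du lieu non équilibré pertinent ; une fois celle-ci et le décompte de dimensions établis, le reste n'est que la traduction formelle --- via la dualité de Poincaré--Lefschetz à coefficients $\Z/2$ sur la variété normale $\RMc$, dont le lieu singulier est de codimension au moins deux --- de l'annulation de $w_1(\RMc)$ hors de $\R K^d$ en termes de duaux de classes fondamentales de composantes pseudo-connexes.
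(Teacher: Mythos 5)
Your proof takes essentially the same route as the paper's: reduce to the codimension-one components of $\R K^d$, discard the unbalanced locus by a codimension bound on the normal-bundle splitting type (your computation $h^1=\delta/2$ together with the appeal to deformation theory reproduces the content of the Eisenbud--Van de Ven lemma the paper cites), and identify the boundary divisors crushed by evaluation (the Fulton--Pandharipande dimension count, which the paper also invokes by reference rather than proving). The two points you explicitly leave partly open are precisely the two cited lemmas, so the structure and substance of your argument coincide with the paper's.
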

\begin{proof}
La démonstration de cette proposition repose essentiellement sur les deux lemmes suivants.
\begin{lemm}[Eisenbud-Van de Ven \cite{E-V}]\label{EVdV}
Soit $Mor^d$ l'ensemble des morphismes $u: \CP^1 \to \CP^3$ de degré $d$. Pour $\rho$ tel que $1\leq \rho \leq d-4$ l'ensemble $S_{d,\rho} \subset Mor^d$, défini comme le lieu des morphismes tels que le fibré normal admet une décomposition du type $\OC(2d-1-\rho) \oplus \OC(2d-1+\rho)$, est une sous-variété de dimension $\dim  S_{d,\rho}=4d-2\rho+4$ (soit de codimension $2\rho-1$).
\end{lemm}
Ainsi les composantes de $K^d$ définies comme le lieu des applications stables $\map$ non équilibrées et telles que la dimension de $H^1\left(C;\Nu \otimes \OP(-\underline{z})\right)$ soit au moins deux, sont en codimension trop grande ($\geq2$) dans $\Mc$. On peut donc exclure de notre étude les composantes de la partie réelle de $K^d$ correspondant au lieu des courbes non équilibrées puisque la première classe de Stiefel-Whitney est un élément de $H^1(\RMc,\Z/2\Z)$.

Il est possible de raffiner ce résultat en excluant la partie réelle des diviseurs irréductibles de la frontière qui ne sont pas écrasés par le morphisme d'évaluation puisque celui-ci est génériquement régulier à cet endroit (voir \cite{W1}).
\begin{lemm}[\cite{F-P}]\label{imcodim1}
L'image d'un diviseur de la frontière $D(A,B;d',d'')$ par le morphisme d'évaluation $\ev : \Mc \to (\CP^3)^{2d}$ est de codimension un dans $(\CP^3)^{2d}$ si et seulement si $|A|=2d'$.
\end{lemm}
Ainsi, seuls les diviseurs irréductibles de la frontière $D(A,B;d',d'')$ qui vérifient $|A|\neq2d'$ contribuent éventuellement à représenter par dualité la première classe de Stiefel-Whitney de la partie réelle.
\end{proof}

\subsection{Bases de l'espace tangent}

\paragraph{Morphismes d'oubli.}\label{oubli}
Soient $l \leq k$ des entiers, il existe un morphisme entre variétés projectives de $\Mck$ vers $\overline{\mathcal{M}}_l^d(\CP^3)$ qui consiste à supprimer certains points marqués d'une application stable $\map \in \Mck$ puis à contracter les branches de la source $C$ devenues instables par insuffisance de points marqués (voir \cite{F-P}).

\begin{defi}Pour un ensemble d'indices $\{i_1,\dots,i_n\}$ dans $[k]$, on note $\ob^k_{i_1,\dots,i_n}$ le \emph{morphisme d'oubli} de $\Mck$ vers $\overline{\mathcal{M}}_{k-n}^d(\CP^3)$ qui supprime les points marqués indexés par l'ensemble $\{i_1,\dots,i_n\}$. Par exemple, pour $\{i\}$ dans $[k]$ $$\begin{array}{cccc}\ob^k_i :&\Mck &\to& \overline{\mathcal{M}}_{k-1}^d(\CP^3) \\& (C,z_1,\dots,z_k,u) &\mapsto& (C_{\ob_i^k}^{stab},z_1,\dots,\hat{z}_i,\dots,z_k,u).\end{array}$$
où $C_{\ob_i^k}^{stab}$ désigne la source éventuellement contractée $C \twoheadrightarrow C_{\ob_i^k}^{stab}$ pour stabiliser. Pour alléger les notations on écrit $\ob$ pour $\ob^k_{[k]}$ l'application qui oublie tous les points marqués.
\end{defi}

L'espace des modules contient un ouvert dense et lisse $\Mc^*$, lieu des applications stables sans automorphisme ou \emph{applications simples} (voir \cite{F-P}).

\begin{lemm}\label{propkero}
Soit $\map$ une application simple de $\Mc^*$, qui ne contracte pas sa source $C$, alors on a une décomposition du noyau $\ker d|_{\map}\ob = \bigoplus_{i=1}^k \ker d|_{\map}\ob^k_i$ et un isomorphisme $\ker d|_{\map}\ob \cong T_{\underline{z}}C^k$.
\end{lemm}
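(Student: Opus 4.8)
The plan is to exploit the explicit description of the forgetful maps as contractions. Let $\map$ be a simple stable map that does not contract its source $C$; in particular $C \cong \CP^1$ (no collapsing means the source is already stable with the map alone, hence irreducible and smooth of genus zero) and the $k$ marked points $z_1,\dots,z_k$ are distinct points of $C$. First I would identify the tangent space to $\Mck$ at $\map$ with the relevant deformation space, noting that the part of the deformation coming from moving the marked points is exactly $T_{\underline z}C^k = \bigoplus_{i=1}^k T_{z_i}C$, sitting inside $\TMmk$ as a direct summand complementary to the deformations of $(C,u)$ itself. Since $u$ is fixed and the source is not contracted, the map $\ob = \ob^k_{[k]}$ simply sends $\map$ to the underlying unmarked stable map $(C,u) \in \Mco$, and similarly $\ob^k_i$ sends $\map$ to $(C,z_1,\dots,\hat z_i,\dots,z_k,u) \in \overline{\mathcal M}_{k-1}^d(\CP^3)$ with \emph{no} contraction, because forgetting one point from a configuration of $k\ge 1$ distinct points on an irreducible genus-zero curve carrying a non-constant degree-$d$ map never destabilizes (the map is already enough, as $d \ge 3$).

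Next I would compute the differentials. Because no contraction occurs near $\map$, in a neighborhood the map $\ob$ is, up to the identification above, literally the projection $\TMmk \cong T_{(C,u)}\Mco \oplus \bigoplus_{i=1}^k T_{z_i}C \to T_{(C,u)}\Mco$ that kills all the marked-point directions; hence $\ker d|_{\map}\ob = \bigoplus_{i=1}^k T_{z_i}C \cong T_{\underline z}C^k$, which is the claimed isomorphism. Likewise $d|_{\map}\ob^k_i$ is the projection that kills only the $i$-th summand $T_{z_i}C$ and is injective on every other summand and on $T_{(C,u)}$-directions, so $\ker d|_{\map}\ob^k_i = T_{z_i}C$. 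The direct-sum decomposition $\ker d|_{\map}\ob = \bigoplus_{i=1}^k \ker d|_{\map}\ob^k_i$ then follows by comparing the two computations, or more structurally by observing that $\ob$ factors as the composition of the $\ob^k_i$ in any order (forgetting all points one at a time) and that at a point where none of the intermediate contractions fire, the kernel of a composition of projections is the sum of the kernels of the factors, which here are pairwise transverse subspaces summing directly.

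The one genuine point requiring care — and the step I expect to be the main obstacle — is justifying that \emph{none of the forgetful maps contracts the source near $\map$}, so that the clean projection picture is valid and the tangent-space identifications are canonical rather than merely dimension counts. Here I would invoke the hypothesis that $\map$ lies in the smooth locus $\Mc^*$ of simple maps and does not contract $C$: the source is then $\CP^1$ with $u$ of degree $d \ge 3$, so the underlying curve $(C,u)$ is stable on its own and remains so after deleting any subset of marked points; moreover, working in the open locus $\Mco^*$ (and its preimages), the forgetful morphisms are \emph{submersions} there, so their differentials really are the surjections with the stated kernels, and the fiber of $\ob$ through $\map$ is an open subset of $C^k$ with tangent space $T_{\underline z}C^k$ at $\underline z$. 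This both pins down the isomorphism $\ker d|_{\map}\ob \cong T_{\underline z}C^k$ and shows the summands $T_{z_i}C$ are in direct sum (the $z_i$ being distinct), completing the proof.
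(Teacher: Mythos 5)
Your proposal is correct and follows essentially the same route as the paper: both arguments rest on the observation that, since $u$ contracts no component, forgetting a marked point requires no stabilization, so the fibre of each $\ob^k_i$ through $\map$ is the curve $C$ itself (the universal-curve picture), giving $\ker d|_{\map}\ob^k_i=T_{z_i}C$, and the direct-sum decomposition then follows from the commutation $\ob^{k-1}_j\circ\ob^k_i=\ob^{k-1}_i\circ\ob^k_j$. The only slip is your parenthetical claim that non-contraction forces $C\cong\CP^1$: a nodal genus-zero source all of whose components are mapped non-constantly also satisfies the hypothesis, but your projection argument goes through verbatim in that case.
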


\begin{proof}
Soit $i$ dans $[2d]$, en l'absence d'automorphisme le morphisme $\ob_i^k$ décrit la courbe universelle au-dessus de l'image $\ob_i^k(\Mck^*)$ (voir \cite{F-P}). Les automorphismes d'une application stable étant un sous-groupe du groupe des automorphismes de son image par un morphisme d'oubli, $\overline{\mathcal{M}}_{k-1}^d(\CP^3)^*$ est inclus dans $\ob_i^k(\Mck^*)$. Donc $(\ob_i^k)^{-1}(C_{\ob_i^k}^{stab},z_1,\dots,\hat{z}_i,\dots,z_k,u)$ est isomorphisme à $C_{\ob_i^k}^{stab}$. De plus, par hypothèse sur $u$ on a l'égalité $C_{\ob_i^k}^{stab}=C$. Donc au point $(C,z_1,\dots,z_k,u)$, on a $\ker d|_{\map}\ob_i ^k= T_{z_i}C_{\ob_i^k}^{stab}=T_{z_i}C$. Pour tout $i,j$ dans $[k]$ distincts, on a l'égalité $\ob_j^{k-1}\circ\ob^k_i=\ob_i^{k-1}\circ\ob^k_j$ et donc $\ker d|_{\map}\ob= \bigoplus_{i=1}^{k} \ker d|_{\map}\ob_i^k$. On en déduit un isomorphisme entre $\ker d|_{\map}\ob$ et $T_{\underline{z}}C^k$.
\end{proof}
On note $\Ker d\ob$ le sous-ensemble du fibré tangent $\TMc$ défini par le noyau de la différentielle du morphisme d'oubli en chaque point. Sa restriction au lieu des applications stables simples non contractantes est un sous-fibré linéaire.

\subsubsection{Bases standards, bases modèles.}

\begin{prop}\label{decomptan}
Soit $\map$ une immersion de $\Mc$, on a la suite exacte
\begin{equation}\label{decompP}
\xymatrix{0 \ar[r] & T_zC \ar[r] & T_{\map}\Mc \ar[r]^-{d\ob} & H^0(C,\Nu) \ar[r] & 0}
\end{equation}
où $T_zC=\bigoplus_{i=1}^{k}T_{z_i}C$.
\end{prop}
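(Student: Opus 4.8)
The plan is to produce the exact sequence by interpreting each of its terms as a piece of the deformation theory of the stable map $\map$, and then identifying the maps with geometrically natural ones. Since $u$ is an immersion, the source $C=\CP^1$ is not contracted, and $\map$ lies in the open stratum $\Mc^*$ of simple maps (an immersion has no automorphisms fixing the marked points), so Lemma \ref{propkero} and the surrounding discussion apply. I would start from the standard description of $T_{\map}\Mc$ as the hypercohomology $\mathbb{H}^1$ of the complex $\bigl[T_C(-\underline{z}) \to u^*T_{\CP^3}\bigr]$ (in degrees $0$ and $1$), or equivalently — since for an immersion this simplifies — from the fact that deformations of $\map$ split into deformations of the $2d$ marked points on the fixed curve, deformations of the complex structure of $C$, and deformations of $u$; the latter two together, modulo reparametrisation, are governed by $H^0(C,\Nu)$. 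The point is that the forgetful map $\ob$ kills exactly the marked-point directions.

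First I would define the left-hand map $T_zC = \bigoplus_{i=1}^{2d}T_{z_i}C \hookrightarrow T_{\map}\Mc$ as the inclusion of $\Ker d\ob$, using Lemma \ref{propkero}, which gives precisely $\ker d|_{\map}\ob \cong T_{\underline{z}}C^{2d}$. Next I would show that the image of $d\ob \colon T_{\map}\Mc \to T_{\ob(\map)}\Mco$ is canonically $H^0(C,\Nu)$. Here I would use that $\ob(\map) = (C,u)$ is a simple (unmarked) stable map which is an unramified immersion of $\CP^1$, so its deformation space is unobstructed: $H^1(C,\Nu)=0$ because $\Nu$ is a rank-two bundle on $\CP^1$ which, $u$ being of degree $d\ge 3$, has both summands of degree $\ge 2d-1 - (d-4) > 0$ by Lemma \ref{EVdV} — hence no $H^1$ — and the tangent space to $\Mco$ at $(C,u)$ is then $H^0(C,\Nu)$ via the normal-sheaf sequence $0\to T_C \to u^*T_{\CP^3}\to \Nu\to 0$ (the $H^0(T_C)$ and $H^1$ pieces account for reparametrisation, which is already quotiented out in the moduli space, and vanish respectively). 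Exactness in the middle then follows from Lemma \ref{propkero}: $\ker d\ob$ is exactly the marked-point summand, and surjectivity onto $H^0(C,\Nu)$ is the unobstructedness just noted together with a dimension count, $\dim T_{\map}\Mc = \dim\Mc = 4d + 2d = 2d + h^0(C,\Nu)$, using $h^0(C,\Nu) = \deg\Nu + 2 = (4d-2) + 2 = 4d$.

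The main obstacle I expect is making the identification "$\operatorname{im} d\ob = H^0(C,\Nu)$" genuinely canonical rather than merely a dimension coincidence: one must check that the differential of the forgetting morphism, which a priori lands in $T_{(C,u)}\Mco$, agrees under the normal-sheaf sequence with the projection $u^*T_{\CP^3}\twoheadrightarrow \Nu$ on sections. This is where one genuinely uses that $u$ is an immersion — so that $T_C \to u^*T_{\CP^3}$ is a subbundle inclusion and $\Nu$ is locally free — and that $C$ is not contracted, so the forgetful map does not introduce a contraction of components and $d\ob$ is simply "forget the marked points, keep $(C,u)$". Once that compatibility is in place the sequence \eqref{decompP} is exact by construction on the left, by the cohomology vanishing on the right, and by Lemma \ref{propkero} in the middle. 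I would record that the splitting is not canonical but that any choice of splitting, together with a choice of basis of each $T_{z_i}C$ and of $H^0(C,\Nu)$ (which will be done via the balanced decomposition $\Nu \cong \OC(2d-1)^{\oplus 2}$ in the equilibrated case), yields the "standard bases" and "model bases" referred to in the subsection title, to be used in the orientation transport of section \ref{trav}.
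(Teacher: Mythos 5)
Your proof takes essentially the same route as the paper's: the sequence is built on the forgetful morphism, with Lemma \ref{propkero} identifying the kernel with $\bigoplus_i T_{z_i}C$ and the cokernel $T_{(C,u)}\Mco$ identified with $H^0(C,\Nu)$ as the space of first-order deformations of the immersed curve. The extra justifications you supply (the $H^1$ vanishing, the dimension count) are correct, though the positivity of the summands of $\Nu$ follows more directly from $\Nu$ being a quotient of the globally generated bundle $u^*T_{\CP^3}$ than from Lemma \ref{EVdV}, which only describes the strata with $1\le\rho\le d-4$ rather than asserting that all immersions lie in them.
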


\begin{proof}
La suite exacte se construit sur le morphisme d'oubli d'après le lemme \ref{propkero} avec $T_{(C,u)}\Mco$ au conoyau. Or, $T_{(C,u)}\Mco$ comme espace tangent est l'espace des déformations à l'ordre un de la courbe immergée définie par $u$, c'est-à-dire $H^0(C,\Nu)$.
\end{proof}

\begin{defi}\label{standP}
Une \emph{base standard} de $\TMmc$ est une base adaptée à la suite exacte \eqref{decompP}. C'est-à-dire, un $6d$-uplet composé de $2d$ éléments $\{e_i\}_{i=1,\dots,k}$, générateurs de chaque $T_{z_i}\CP^1$ et $4d$ éléments $\{f_i,g_i\}_{i=1,\dots,2d}$ de sorte que $(f_i,g_i)$ se projette sur une base de $H^0(\CP^1,\Nu)$. On écrira une telle base dans cet ordre $(e_1,\dots,e_{k},f_1,g_1,\dots,f_{2d},g_{2d})$.
\end{defi}

Pour $i$ dans $[2d]$, on note $T^{(i)}_{(\CP^3)^{2d}}$ le sous-fibré de $T_{(\CP^3)^{2d}}$ associé à la $i$-ème composante $\{\vec{0}\} ×\dots ×T_{\CP^3} ×\dots ×\{\vec{0}\}$. La décomposition de $T_{(\CP^3)^{2d}}=\bigoplus_{i=1}^{2d}T^{(i)}_{(\CP^3)^{2d}}$ se relève par le morphisme d'évaluation en tout point régulier \begin{equation}\label{decomp}T_{\bullet}{\Mc}=\bigoplus_{i=1}^{2d}\ev^{*}[T^{(i)}_{\ev(\bullet)}(\CP^3)^{2d}].\end{equation}

\begin{nota}Pour $\map$ une immersion et $i$ dans $[2d]$, le fibré $\Nu \otimes \mathcal{O}_{C}(-\hat{\underline{z}^i})$ où $\hat{\underline{z}}^i$ désigne $(z_1,\dots,\hat{z}_i,\dots,z_{2d}) \in C^{2d-1}$ sera noté $\Nui$.
\end{nota}
\begin{prop}\label{decompfib}
Soit $\map$ une immersion de $\Mc$ et $i$ dans $[2d]$, alors on a la suite exacte
\begin{displaymath}
\xymatrix{0 \ar[r] & T_{z_i}C \ar[r] & \ev^*T^{(i)}_{u(z_i)}(\CP^3)^{2d} \ar[r] & H^0(C,\Nui) \ar[r] & 0}
\end{displaymath}
\end{prop}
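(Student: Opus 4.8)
Le plan est d'obtenir la suite en restreignant la suite exacte \eqref{decompP} de la proposition \ref{decomptan} au $i$-ème facteur de la décomposition \eqref{decomp}. Au point $\map$ considéré, où cette décomposition est valide, le fibré $\ev^*T^{(i)}_{u(z_i)}(\CP^3)^{2d}$ s'identifie au sous-espace $W_i\subset\TMmc$ des vecteurs $v$ tels que $d\ev_j(v)=0$ pour tout $j\neq i$, où $\ev_j$ désigne la $j$-ème composante de $\ev$. Je vérifierais d'abord que le terme $T_{z_i}C$ de \eqref{decompP} est contenu dans $W_i$ — déplacer le seul point marqué $z_i$ le long de $C$ ne déplace pas les images $u(z_j)$, $j\neq i$ — puis que $W_i\cap\ker d\ob=T_{z_i}C$ : d'après le lemme \ref{propkero} on a $\ker d\ob=\bigoplus_jT_{z_j}C$, et un vecteur $\sum_kw_k$ de l'intersection vérifie $du(w_j)=d\ev_j\big(\sum_kw_k\big)=0$ pour $j\neq i$, donc $w_j=0$ puisque $u$ est une immersion.

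Le point technique est l'identification, pour chaque $j$, du composé de $d\ev_j$ avec la projection $T_{u(z_j)}\CP^3\to(\Nu)_{z_j}$ : il coïncide avec le composé de $d\ob\colon\TMmc\to H^0(C,\Nu)$ et de l'évaluation en $z_j$ des sections du fibré normal. En effet, une déformation au premier ordre $(\underline{z}(t),u_t)$ de $\map$ déplace $u(z_j)$ de $du(\dot{z}_j)+\dot{u}(z_j)$, et modulo la droite tangente $du(T_{z_j}C)$ il ne subsiste que la classe de $\dot{u}(z_j)$ dans $(\Nu)_{z_j}$, qui est la valeur en $z_j$ de $d\ob(v)$. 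On en déduit que $d\ob(W_i)$ est contenu dans le sous-espace des sections s'annulant en tous les $z_j$ avec $j\neq i$, c'est-à-dire dans $H^0(C,\Nui)$. Pour la surjectivité, je relèverais une section $s$ de $H^0(C,\Nui)$ en un vecteur $v\in\TMmc$ via \eqref{decompP} ; comme $s(z_j)=0$, le vecteur $d\ev_j(v)$ est tangent à $u(C)$ en $u(z_j)$, donc vaut $-du(w_j)$ pour un unique $w_j\in T_{z_j}C$, et remplacer $v$ par $v+\sum_{j\neq i}w_j$ annule alors $d\ev_j(v)$ pour $j\neq i$ sans modifier $d\ob(v)$ (lemme \ref{propkero}), ce qui fournit un vecteur de $W_i$ d'image $s$.

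La difficulté attendue est de faire aboutir cet argument de surjectivité, c'est-à-dire de garantir que $h^0(C,\Nui)=2$ — de sorte que le décompte de dimensions $\dim W_i=3=1+2$ achève la preuve — ainsi que la validité même de \eqref{decomp}. Le fibré $\Nui$ étant de degré nul sur $C=\CP^1$, on a $h^0(C,\Nui)=2$ exactement lorsque $h^1\left(C;\Nu\otimes\OP(-\underline{z})\right)\leq1$, condition satisfaite par les immersions situées hors du lieu exceptionnel — là où précisément le morphisme d'évaluation cesse d'être un isomorphisme local, sur les courbes non équilibrées. C'est donc l'hypothèse faite sur $\map$ qui contrôle simultanément ce groupe de cohomologie et la structure de l'espace tangent ; le reste — identification de $W_i$, factorisation de $d\ev_j$, correction du relèvement — est purement formel une fois en main la proposition \ref{decomptan} et le lemme \ref{propkero}.
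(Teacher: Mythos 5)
Your argument is correct and follows essentially the same route as the paper: the paper identifies $\ev^*T^{(i)}$ with $\Ker d\hat{ev}_i$ (your $W_i$), takes $T_{z_i}C=\ker d\ob^{2d}_i$ as the sub-object, and quotes \cite{Pu} and le lemme 1.3 de \cite{W1} for precisely the two identifications you verify by hand, namely that $d\ev_j$ modulo the tangent line $du(T_{z_j}C)$ is the evaluation at $z_j$ of the normal section $d\ob(v)$, and the resulting surjectivity onto $H^0(C,\Nui)$ via your lift-and-correct step. Your closing paragraph is a superfluous hedge: the surjectivity argument you already gave works for any immersion and nowhere needs $h^0(C,\Nui)=2$ nor the global validity of the decomposition \eqref{decomp}, since the middle term is simply $\Ker d\hat{ev}_i$ whatever its dimension.
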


\begin{proof}
Pour $i$ dans $[2d]$, on définit $\hat{ev}_i$ le morphisme composé $$\hat{ev}_i:\Mc \xrightarrow{\ob^{2d}_i} \Mless \xrightarrow{\evl} (\CP^3)^{2d-1}$$ de sorte que le tiré-en-arrière $\ev^*(T^{(i)}_{(\CP^3)^{2d}})$ soit précisément $\Ker d\hat{ev}_i$. La suite exacte $\xymatrix{0 \ar[r] & \Ker d\ob^{2d}_i \ar[r] & \Ker d\hat{ev}_{i} \ar[r]^-{d\ob^{2d}_i} & d\ob^{2d}_i(\Ker d\hat{ev}_{i}) \ar[r] & 0}$ fournit l'égalité $d\ob^{2d}_i(\Ker d\hat{ev}_{i})=\Ker d\evl$ (voir \cite{Pu}). On conclut ensuite en appliquant les égalités $\ker d\ob^{2d}_i = T_{z_i}C$ (proposition \ref{decomptan}) et $\ker d\evl = H^0(C,\Nui)$ (lemme 1.3 de \cite{W1}]).
\end{proof}

\begin{defi}\label{modelP}
Soit $\map$ une immersion équilibrée : le fibré normal $\Nu$ admet une décomposition en somme directe de deux fibrés en droites isomorphes. Une base standard $\mathcal{B}$ de $T_{\map}\Mc$ est une \emph{base modèle} lorsqu'elle respecte la décomposition \eqref{decomp} et la décomposition du fibré normal. C'est-à-dire que chaque élément de la base appartient à un des sous-espaces $\ev^*T^{(i)}_{\map}(\CP^3)^{2d}$ et que les couples de vecteurs qui se projettent sur une base de $H^0(\CP^1,\Nui)$ forment une base compatible avec la décomposition du fibré normal (une section dans chaque sous-fibré linéaire). Si de plus $\mathcal{B}=(e_1,\dots,e_{2d},f_1,g_1,\dots,f_{2d},g_{2d})$ est telle que $e_i$ est dans $T_{z_i}\CP^1$, $\{f_i,g_i\}$ se projettent sur $H^0(\CP^1,\Nui)$ et $(f_1,\dots,f_{2d})$ se projette sur une base de sections d'un des sous-fibré linéaire de la décomposition du fibré normal, on dit que la base modèle est \emph{ordonnée}.
\end{defi}
\begin{rema}
Quand $\mathcal{B}$ est modèle ordonée, automatiquement $(g_1,\dots,g_{2d})$ est une base pour l'autre sous-fibré linéaire de la décomposition du fibré normal.
\end{rema}

\subsection{Traverser la frontière}\label{trav}

Un point générique $(C^1\cup C^2,\underline{z},u)$ de la frontière est tel que, pour $i$ dans $\{1,2\}$, $C^i$ est isomorphe à $\CP^1$, $u(C^i)$ est une immersion et $\xi=\{C^1 \cap C^2\}$ est un point double ordinaire. Si on note $\deg u(C^1)=d'$ et $\deg u(C^2)=d''$, un élément générique a deux branches à la source avec, disons, $2d'+\ell$ points marqués sur une branche et $2d''-\ell$ sur l'autre (on rappelle que $d=d'+d''$). Par convention $\ell$ est toujours un entier positif.

Soit $\R\D$ une composante pseudo-connexe quelconque de la partie réelle du diviseur irréductible $D(A,B;d',d'')$ où $|A|=2d'+\ell$ avec $1\leq\ell\leq 2d$, ce sont les composantes à étudier d'après la proposition \ref{reducfront}. On distingue deux cas : $d'$ non nul puis $d'$ nul ($d''$ n'est jamais nul puisque $|B|=2d''-\ell<2d''$).
\subsubsection{Chemins transverses. Cas $d'\neq0$}
Un point générique de $\R\D$ possède à la source une structure réelle telle que l'application soit réelle et qui identifie $\RC^1$ et $\RC^2$ avec $\RP^1$.
\paragraph{Choix d'un point de référence.} Prenons un point générique $\mapr$ de $\R\D$ et (re)indexons les points marqués pour que $\{z^{\star}_1,\dots,z^{\star}_{\ell}\} \cup \{z^{\star}_{2d''+1},\dots,z^{\star}_{2d}\}$ soient dans $\R C_{\star}^1$ dans l'ordre cyclique $z^{\star}_1<\dots<z^{\star}_{\ell}<\xi^{\star}<z^{\star}_{2d''+1}<\dots<z^{\star}_{2d}$ et que $\{z^{\star}_{\ell+1},\dots,z^{\star}_{2d''}\}$ (resp. $\emptyset$ si $\ell=2d''$) soient dans $\R C_{\star}^2$ dans l'ordre cyclique $\xi^{\star}<z^{\star}_{\ell+1}<\dots<z^{\star}_{2d''}$ (voir figure \ref{config}).

\begin{figure}[htp]
\begin{center}
\input{configP.pstex_t}
\end{center}
\caption{Au point $\mapr$ : $d=3$, $d'=1$, $\ell=3$}\label{config}
\end{figure}

\paragraph{Choix d'un chemin de référence.}\label{choixchemin}

Soit $B_{\star}$ un voisinage contractile de $\mapr$ dans $\RMc$ de sorte que $\R\D$ découpe $B_{\star}$ en deux composantes connexes. Fixons un chemin générique $\gamma:[0,1] \to B_{\star}$ transverse à $\R\D$ uniquement au point $\mapr$. Sans ambiguïté, on écrit aussi $\gamma$ pour désigner son image $\gamma([0,1]) \subset \RMc$. L'image de $\gamma$ par le morphisme d'évaluation est notée $\gamma_* \subset (\RP^3)^{2d}$ et $\mapt$ désigne le point $\gamma(t)$ pour $t$ dans $[0,1]$. On distingue plus particulièrement $\mapd$ le point de \og départ\fg\ et $\mapa$ le point d'\og arrivée\fg\ du chemin, chacun se situant dans une composante connexe différente de $\RM \cap B_{\star}$. Par généricité de $\gamma$ ces points correspondent à des immersions équilibrées.

\paragraph{Construction d'un chemin de comparaison.}\label{chemintd}

On construit un chemin $\tilde{\gamma}$ associé à $\gamma$ mais à valeurs dans le lieu régulier du morphisme d'évaluation. Au point $\mapr$ est associé le point $\maprtd$ correspondant à la m\^eme courbe mais avec des points marqués différents, de sorte que $\tilde{z}^{\star}_i$ soit dans $\R C^2$ pour $1 \leq i \leq \ell$ et $\tilde{z}^{\star}_i$ soit égal à $z^{\star}_i$ pour $\ell<i\leq2d$. De plus on impose l'ordre cyclique $\xi^{\star}<\tilde{z}^{\star}_1<\dots<\tilde{z}^{\star}_{\ell}<\dots<\tilde{z}^{\star}_{2d''}$ sur $\R C^2$.\label{maprtilde} (Voir figure \ref{configPtd}.)

\begin{figure}[htp]
\begin{center}
\input{configPtd.pstex_t}
\end{center}
\caption{Au point $\maprtd$ : $d=3$, $d'=1$, $\ell=3$}\label{configPtd}
\end{figure}
Par construction $\maprtd$ est dans $\mathcal{K}_{0}^d$ donc, d'après le lemme \ref{imcodim1}, c'est un point régulier du morphisme d'évaluation. Soit $(C_{\star},\underline{z}^{\star}\cup\underline{\tilde{z}}^{\star},u_{\star})$ le point de $\RMcl$ dont la projection par le morphisme d'oubli $\ob^{2d+\ell}_{\ell+1,\dots,2\ell}$ est précisément $(C_{\star},\underline{z}^{\star},u_{\star})$. Ce qui revient à écrire $\underline{z}^{\star}\cup\underline{\tilde{z}}^{\star}$ pour désigner le point $(z^{\star}_1,\dots,z^{\star}_{\ell},\tilde{z}^{\star}_1,\dots,\tilde{z}^{\star}_{\ell},z^{\star}_{\ell+1},\dots,z^{\star}_{2d})$ de $(C_{\star})^{2d+\ell}\setminus Diag_{2d+\ell}$. Soient $\Gamma$ un chemin qui relève $\gamma$ dans $\RMl^*$ pour le morphisme d'oubli $\ob^{2d+\ell}_{\ell+1,\dots,2\ell}$ au point $(C_{\star},\underline{z}^{\star}\cup\underline{\tilde{z}}^{\star},u_{\star})$ et $\tilde{\gamma}:[0,1] \to \RMc^*$ le chemin composé $\ob^{2d+\ell}_{1,\dots,\ell}\circ\Gamma$
\begin{displaymath}
\xymatrix{\Gamma \subset \RMcl^* \ar[r]^{\ob^{2d+\ell}_{1,\dots,\ell}} & \tilde{\gamma} \subset \RMc^*.}
\end{displaymath}
Le chemin de comparaison $\tilde{\gamma}$ est transverse à la frontière au point $\maprtd$ et compatible avec $\gamma$ dans le sens où les courbes et les points marqués en commun coïncident tout le long. Comme précédemment on note $\mapttd=\tilde{\gamma}(t)$, pour $t$ dans $[0,1]$.

\subsubsection{Base modèle positive au point de départ}\label{basd}

Fixons pour la suite une orientation sur l'espace projectif réel $\RP^3$.
\begin{defi}
Soit $(e_1,\dots,e_{k},f_1,g_1,\dots,f_{2d},g_{2d})$ une base modèle ordonnée de $\TRMmc$ où $\map$ est une immersion équilibrée. Alors pour tout triplet de vecteurs $(e_i,f_i,g_i)$ on associe un signe $\angle(e_i,f_i,g_i) = \{(+),(-)\}$ selon que son image par $d|_{\map}\ev$ soit une base positive ou négative de $T_{u(z_i)}\RP^3$.
\end{defi}

On note $A_0=u_0(C_0)$ la courbe réelle définie par le point de départ du chemin de référence $\gamma$. Fixons une orientation $\mathfrak{o}_{\R A_0}$ sur $\R A_0$ et construisons une base positive de $\TRMd$ comme suit. Les $2d$ premiers vecteurs $\{e_i^0 \in T_{z^0_i}C_0\}_{i=1,\dots,2d}$ sont tels que les vecteurs $\{d_{z^0_i}u_0(e_i^0)\}_{i=1,\dots,k_d}$ de $T_{\RA_0}$ soient positifs relativement à $\mathfrak{o}_{\R A_0}$. Les $4d$ derniers vecteurs $\{f_i^0,g_i^0\}_{i=1,\dots,2d}$ complètent la base en une base modèle ordonnée de l'espace tangent $\TRMd$ avec $\angle(e^0_i,f^0_i,g^0_i)=(+)$ pour tout $i$ dans $[2d]$. La base ainsi définie $\mathcal{B}_0=\basd$ est positive. Le fibré normal $\Nud$ est un fibré réel et on note $H^0_{\R}(C,\Nud)$ la partie réelle de $H^0(C,\Nud)$. Par construction, les éléments $\{f_i^0,g_i^0\}_{i=1,\dots,2d}$ de $\mathcal{B}_0$ se projettent dans $H^0_{\R}(C,\Nud)$ et chaque couple $(f_i,g_i)$ sur une base de $H^0_{\R}(C,\Nuid)$.

Suivant la même méthode on construit une base modèle ordonnée positive au point de départ du chemin de comparaison (dans $\TRMdtd$) que l'on note $\tilde{\mathcal{B}}_0=(\tilde{e}_1^0,\dots,\tilde{e}_{2d}^0,\tilde{f}_1^0,\tilde{g}_1^0,\dots,\tilde{f}_{2d}^0,\tilde{f}_{2d}^0)$. Chaque couple $(\tilde{f}_i,\tilde{g}_i)$ se projette sur une base de $H^0_{\R}(\CP^1,\Nuidtd)$ et donc le $4d$-uplet $(\tilde{f}_1^0,\tilde{g}_1^0,\dots,\tilde{f}_{2d}^0,\tilde{g}_{2d}^0)$ est une base de $H^0_{\R}(\CP^1,\Nud)$.

\subsubsection{Homotopies et trivialisation. Cas $d'\neq 0$}

\paragraph{Homotopie au point de départ.}\label{homotoppoint}

\begin{lemm}\label{turnlemmaP}
Le $6d$-uplet $(e_1^0,\dots,e_{2d}^0,\tilde{f}_1^0,\tilde{g}_1^0,\dots,\tilde{f}^0_{2d},\tilde{g}^0_{2d})$ définit une base standard positive de $T_{\gamma(0)}\RMc$.
\end{lemm}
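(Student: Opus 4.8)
The goal is to show that starting from the positive model basis $\mathcal{B}_0 = \basd$ of $\TRMd$ (built in section \ref{basd}), if we replace the last $4d$ vectors $(f_1^0,g_1^0,\dots,f_{2d}^0,g_{2d}^0)$ by the corresponding vectors $(\tilde f_1^0,\tilde g_1^0,\dots,\tilde f_{2d}^0,\tilde g_{2d}^0)$ coming from the comparison basis $\tilde{\mathcal{B}}_0$, we still get a \emph{positive} standard basis of the same tangent space $T_{\gamma(0)}\RMc$. The plan is to argue that both $4d$-tuples $(f_1^0,g_1^0,\dots)$ and $(\tilde f_1^0,\tilde g_1^0,\dots)$ project, via $d\ob$ in the exact sequence \eqref{decompP}, onto bases of the \emph{same} real space $H^0_{\R}(C_0,\Nud)$ — this is exactly the content of the last sentences of sections \ref{basd}: both couples $(f_i^0,g_i^0)$ and $(\tilde f_i^0,\tilde g_i^0)$ give bases of $H^0_{\R}(\CP^1,\Nuid)$ (recall $C_0=\CP^1$ since $\mapd$ is an equilibrated immersion), so assembled they are two bases of $H^0_{\R}(\CP^1,\Nud)$. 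Since the first $2d$ vectors $e_i^0$ span the kernel $T_{z^0}C_0$ of $d\ob$ in both tuples and are kept identical, the hybrid $6d$-tuple is automatically a basis of $\TRMd$ adapted to \eqref{decompP}, i.e. a standard basis in the sense of Definition \ref{standP}.

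It then remains to check the \emph{sign}, i.e. that this hybrid basis is positive for the orientation of $\RMc$ transported from $\RP^3$ through $\Rev$ as described in the introduction and section \ref{evalbase}. Here the key is the fiberwise decomposition \eqref{decomp}, $T_{\gamma(0)}\RMc = \bigoplus_{i=1}^{2d}\ev^*T^{(i)}$, together with Proposition \ref{decompfib}: for each $i$, the triple $(e_i^0,f_i^0,g_i^0)$ lies in $\ev^*T^{(i)}_{u_0(z^0_i)}(\RP^3)^{2d}$ and $d\ev$ sends it to a basis of $T_{u_0(z^0_i)}\RP^3$ whose orientation is $\angle(e^0_i,f^0_i,g^0_i)=(+)$ by construction of $\mathcal{B}_0$. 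So the whole question reduces, factor by factor, to comparing the orientation induced by $(e_i^0,f_i^0,g_i^0)$ with that induced by $(e_i^0,\tilde f_i^0,\tilde g_i^0)$ on $T_{u_0(z^0_i)}\RP^3$: the product of these $2d$ local signs is the sign of the hybrid basis relative to $\mathcal{B}_0$, hence relative to the chosen orientation of $\RMc$.

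To evaluate each local sign I would use the exact sequence of Proposition \ref{decompfib}: $(f_i^0,g_i^0)$ and $(\tilde f_i^0,\tilde g_i^0)$ both project onto bases of $H^0_{\R}(\CP^1,\Nuid)$, and $e_i^0$ spans $T_{z^0_i}C_0$; so the sign is the determinant of the change of basis on $H^0_{\R}(\CP^1,\Nuid)$ from $(\bar f_i^0,\bar g_i^0)$ to $(\bar{\tilde f}_i^0,\bar{\tilde g}_i^0)$. The point is that $\Nuid=\Nud\otimes\mathcal{O}(-\hat{\underline z^0}^i)$ is a real line-bundle-pair of total (real) degree compatible with an even-dimensional space of real sections, and the relevant fact — which I expect to be the \emph{main obstacle} — is that passing from $(f_i^0,g_i^0)$ to $(\tilde f_i^0,\tilde g_i^0)$ amounts to ``turning'' the model basis (hence the name \emph{turn lemma}): each couple rotates inside its rank-two real space, and since there are two line sub-bundles and we keep both the section in each sub-bundle, the change of basis has positive determinant on each $H^0_{\R}(\CP^1,\Nuid)$. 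Concretely I would reduce to $\CP^1$ with standard real structure, write $\Nud \cong \mathcal{O}(a)\oplus\mathcal{O}(a)$ with a fixed real trivialization, express both model bases in coordinates, and check that the change-of-basis matrix is block-diagonal with each block of determinant $+1$ (or at worst a product of an even number of sign flips). Once every local sign is $(+)$, the hybrid $6d$-tuple has the same sign as $\mathcal{B}_0$, namely positive, which is the assertion of the lemma.
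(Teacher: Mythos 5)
Your first paragraph is correct: both $4d$-tuples project onto bases of the \emph{same} space $H^0_{\R}(C_0,\Nud)$ while the $e_i^0$ span the kernel of $d\ob$, so the hybrid $6d$-tuple is a standard basis in the sense of Definition \ref{standP}. The gap is in the sign computation. Your reduction to a product of $2d$ local signs rests on the claim that each triple $(e_i^0,\tilde{f}_i^0,\tilde{g}_i^0)$ sits in the summand $\ev^*T^{(i)}$ of the decomposition \eqref{decomp} at the point $\gamma(0)$. It does not: $(\tilde{f}_i^0,\tilde{g}_i^0)$ projects onto a basis of $H^0_{\R}(\CP^1,\Nuidtd)$, i.e.\ sections of $\Nud$ vanishing at the points $\tilde{z}^0_j$, $j\neq i$, whereas the summand $\ev^*T^{(i)}$ at $\gamma(0)=\mapd$ is cut out by vanishing at the points $z^0_j$, $j\neq i$. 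These marked points differ for $j\leq\ell$, so the hybrid basis is standard but \emph{not} a model basis at $\gamma(0)$; the change-of-basis matrix from $(f^0_1,g^0_1,\dots)$ to $(\tilde{f}^0_1,\tilde{g}^0_1,\dots)$ on the $4d$-dimensional space $H^0_{\R}(\CP^1,\Nud)$ is not block-diagonal, and ``comparing $(f_i^0,g_i^0)$ with $(\tilde{f}_i^0,\tilde{g}_i^0)$ inside $H^0_{\R}(\CP^1,\Nuid)$'' is not meaningful, since the second pair does not lie in that subspace. The sign to be computed is a single global determinant, not a product of $2d$ local ones.

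A symptom that the key idea is missing: your argument nowhere uses the cyclic order of $\underline{z}^0\cup\underline{\tilde{z}}^0$ on $\RP^1$, yet an argument insensitive to that order cannot distinguish the departure point from the arrival point, where minus signs do appear (Lemma \ref{signarrivee}). The paper computes the global determinant by a homotopy of bases: identifying $H^0_{\R}(\CP^1,\OC(2d-1))$ with $R_{2d-1}[X,Y]$, it uses the cyclic order $z^0_1<\dots<z^0_{\ell}<\tilde{z}^0_1<\dots<\tilde{z}^0_{\ell}<z^0_{\ell+1}<\dots<z^0_{2d}$ to build a collision-free path $c^0$ in $(\RP^1)^{2d}\setminus Diag_{2d}$ from $\underline{z}^0$ to $\underline{\tilde{z}}^0$, hence a continuous family $\mathbf{h}^0_t$ of bases joining $(f^0_i,g^0_i)$ to a basis of $H^0_{\R}(\CP^1,\Nuidtd)$; the conditions $\angle(\tilde{e}_i^0,\mathbf{h}^0_1(f_i^0),\mathbf{h}^0_1(g_i^0))=(+)$ then identify the endpoint with $(\tilde{f}_i^0,\tilde{g}_i^0)$ up to positive scalars, and homotopic bases have the same sign. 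At $\gamma(1)$ the cyclic order of the first $\ell$ points is reversed and that is exactly where the negative signs come from. Your ``turning'' picture would have to be implemented by such an isotopy of configurations; without it, the positivity of the determinant is not established.
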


\begin{proof}
Soit le point $\Gamma(0)=(C,\underline{z}^0\cup\underline{\tilde{z}}^0,u_0) \in \RMl^*$ comme défini dans la construction du chemin de comparaison (\ref{chemintd}). Quitte à inverser le chemin, on peut supposer que l'ordre cyclique des points $\underline{z}^0\cup\underline{\tilde{z}}^0$ s'écrive $z^0_1<\dots<z^0_{\ell}<\tilde{z}^0_1<\dots<\tilde{z}^0_{\ell}<z^0_{\ell+1}<\dots<z^0_{2d}$ (voir figure \ref{config0P}).
\begin{figure}[htp]
\begin{center}
\input{config0P.pstex_t}
\end{center}
\caption{Au point $\gamma(0)$ : $d=3$, $\ell=3$}\label{config0P}
\end{figure}\\
Fixons des représentants $(\underline{z}^0,u_0)$ et $(\underline{\tilde{z}}^0,u_0)$ pour $\mapd$ et $\mapdtd$ dans $(\RP^1)^{2d} ×\RMor^d$ de sorte que $z^0_i=\tilde{z}^0_i$ lorsque $i$ est dans $\{\ell+1,\dots,2d\}$. Puisque $u_0$ est une immersion équilibrée, on rappelle que $\Nud$ se décompose en $\OC(2d-1) \oplus \OC(2d-1)$. On considère un isomorphisme de $\R$-espaces vectoriels entre $H^0_{\R}(\CP^1,\OC(2d-1))$ et $R_{2d-1}[X,Y]$, l'espace vectoriel des polynômes homogènes réels de degré $2d-1$. Ainsi $H^0(\CP^1,\Nui)$ est associé au sous-espace de $R_{2d-1}[X,Y]^{\oplus 2}$ des couples de polynômes qui s'annulent en $\{z_1,\dots,z_{2d}\} \setminus \{z_i\}$. D'après l'ordre cyclique sur les points marqués, on peut construire un chemin $c^0:[0,1] \to (\RP^1)^{2d} \setminus Diag_{2d}$ qui relie $c^0(0)=\underline{z}^0$ à $c^0(1)=\underline{\tilde{z}}^0$ et qui reste constant en $z^0_{\ell+1}=\tilde{z}^0_{\ell+1},\dots,z^0_{2d}=\tilde{z}^0_{2d}$. On construit ainsi une homotopie de base $h_{t \in [0,1]}^0$ dans l'espace vectoriel $R_{2d-1}[X,Y]$ de sorte que, pour $i$ dans $[2d]$, le couple $(h_1^0(f_i^0),h_1^0(g_i^0))$ définisse une base du sous-espace $H^0_{\R}(\CP^1,\Nuidtd)$. D'après le choix de $\tilde{\mathcal{B}}_0=(\tilde{e}_1^0,\dots,\tilde{e}_{k_d}^0,\tilde{f}_1^0,\tilde{g}_1^0\dots,\tilde{f}_{2d}^0,\tilde{g}_{2d}^0)$, les vecteurs $\tilde{e}_i$ de $T_{\tilde{z}_i}\RP^1$ sont tous orientés positivement relativement à $\mathfrak{o}_{\RA_0}$. On obtient par $\mathbf{h}^0=h^0\oplus h^0$ une famille de vecteurs de $\TRMd$ ordonnée avec $(\tilde{e}_1^0,\dots,\tilde{e}_{k_d}^0)$ car $\mathbf{h}^0_1(f^0_i,g^0_i)$ est dans $H^0_{\R}(\CP^1,\Nuidtd)$ et positive puisque $\angle(\tilde{e}_i^0,\mathbf{h}^0_1(f^0_i),\mathbf{h}^0_1(g^0_i))=\angle(e_i^0,f^0_i,g^0_i)=(+)$ pour $i$ dans $[2d]$. On en déduit l'existence de réels $\lambda^i_+ > 0$ et $\mu^i_+ > 0$ tels  que $h^0_1(f^0_i)=\lambda^i_+\tilde{f}^0_i$ et $h^0_1(g^0_i)=\mu^i_+\tilde{g}^0_i$ pour $i$ dans $[2d]$. En conclusion, $(f^0_1,g^0_1,\dots,f_{2d}^0,g_{2d}^0)$ et $(\tilde{f}_1^0,\tilde{g}_1^0,\dots,\tilde{f}_{2d}^0,\tilde{g}_{2d}^0)$ sont homotopes comme bases de $H^0_{\R}(\CP^1,\Nud)$ et le résultat s'en déduit par positivité de la base $\mathcal{B}_0$.
\end{proof}

\paragraph{Trivialisation le long du chemin.}\label{trivial}

On décrit une trivialisation du fibré tangent $\TRMc$ le long du chemin de comparaison $\tilde{\gamma}$. Cette dernière nous permet de construire une trivialisation le long du chemin de référence ${\gamma}$. Le chemin de comparaison est dans le lieu régulier du morphisme d'évaluation, en particulier la restriction du morphisme d'évaluation réel à $\tilde{\gamma}$ est de jacobien partout non nul et la décomposition \eqref{decomp} se relève le long de $\tilde{\gamma}$
$$T|_{\tilde{\gamma}}{\RMc}=\bigoplus_{i=1}^{2d}(\Rev)^{*}T|_{\tilde{\gamma}}^{(i)}{\RP^3}.$$
Rappelons que $\hat{ev}_i={\ob_i^{2d}}\circ{\evl}$ et $\ev^*(T^{(i)}_{(\CP^3)^{2d}})=\Ker d\hat{ev}_i$.
Soit $\tilde{\Phi}$ une trivialisation de $T|_{\tilde{\gamma}}{\RMc}$ issue de $\mathcal{B}_0$, compatible avec la décomposition \eqref{decomp} et adaptée à la suite exacte de la proposition \ref{decompfib} (voir \cite{Pu}). On note $\tilde{e}_i:\tilde{\gamma} \to \Ker d\ob_i^{2d}$ les $2d$ sections tautologiques réelles de chaque sous-fibré $\Ker d\ob_i^{2d}|_{\tilde{\gamma}}$ de sorte que  $\tilde{e}_i(\tilde{\gamma}(0))=\tilde{e}_i^0$ et $\tilde{f}_i,\tilde{g}_i:\tilde{\gamma} \to \Ker d\hat{ev}_i/\Ker d\ob_i^{2d}$ les $4d$ sections tautologiques réelles telles que  $\tilde{f}_i(\tilde{\gamma}(0))=\tilde{f}_i^0$,  $\tilde{g}_i(\tilde{\gamma}(0))=\tilde{g}_i^0$ et $(\tilde{f}_i(\tilde{\gamma}(t)),\tilde{g}_i(\tilde{\gamma}(t)))$ forment une base, pour $i$ dans $[2d]$ et $t$ dans $[0,1]$.

Soit $\phi$\label{phi} une trivialisation de $\Ker d\ob$ le long de $\gamma$, issue de $\mathcal{B}_0|_{\ker d\ob}$ et compatible avec la décomposition en droites du lemme \ref{propkero}. On note $e_i:\gamma \to \Ker d\ob_i^{2d}$ les $2d$ sections tautologiques de chaque sous-fibré $\Ker d\ob_i^{2d}|_{\gamma}$ issues de $e^0_i \in \ker d_{\gamma(0)}\ob_i^{2d}$. Le $6d$-uplet $\overline{\mathcal{B}}_t=\bast_{t \in [0,1]}$ est une famille libre de sections du fibré $T|_{\gamma}{\RMc}$ qui définie la trivialisation recherchée.

\paragraph{Revenir au modèle.}\label{B1}

On décrit une homotopie entre la base standard $\mathcal{\overline{B}}_1$ de $\TRMa$ et une base modèle. La méthode est identique à celle de la démonstration du lemme \ref{turnlemmaP}.

\begin{lemm}\label{basa}
Il existe une base modèle $\mathcal{B}_1=(e_1^1,\dots,e_{2d}^1,f_1^1,g_1^1,\dots,f_{2d}^1,g_{2d}^1)$ de $\TRMa$ ayant les propriétés suivantes
\begin{itemize}
\item les $4d$-uplets $(\tilde{f}^1_1,\tilde{g}_1^1,\dots,\tilde{f}^1_{2d},\tilde{g}^1_{2d})$ et $(f^1_1,g^1_1,\dots,f^1_{2d},g^1_{2d})$ se projettent sur des bases homotopes dans $H^0_{\R}(C_1,\Nua)$ ;
\item pour $i$ dans $\{1,\dots,2d''-\ell\}$, le couple $(f_i^1,g_i^1)$ se projette sur une base de $H^0_{\R}(C_1,\mathcal{N}_{u_1,-\ob^{2d}_{\ell+i}(\underline{z})})$ ;
\item pour $i$ dans $\{2d''-\ell+1,\dots,2d''\}$, le couple $(f_i^1,g_i^1)$ se projette sur une base de $H^0_{\R}(C_1,\mathcal{N}_{u_1,-\ob^{2d}_{2d''+1-i}(\underline{z})})$ ;
\item pour $i$ dans $\{2d''+1,\dots,2d\}$, le couple $(f_i^1,g_i^1)$ se projette sur une base de $H^0_{\R}(C_1,\Nui)$.
\end{itemize}
On convient lorsque $2d''-\ell$ est nul de remplacer $\{1,\dots,2d''-\ell\}$ par l'ensemble vide.
\end{lemm}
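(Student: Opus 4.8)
The plan is to rerun the argument of Lemme \ref{turnlemmaP}, but at the arrival point $\gamma(1)$ instead of at the departure point; the only genuinely new ingredient is the combinatorics of the \emph{other} real smoothing of the central curve, and this is what produces the reindexing in items (2)--(4). First I would determine the cyclic order of the $2d+\ell$ marked points of $\Gamma(1)$. Since $\gamma$ meets $\R\D$ transversally only at $\mapr$ and $\mapd,\mapa$ lie in the two components of $(\RM\cap B_{\star})\setminus\R\D$, the curves $C_0$ and $C_1$ are the two real smoothings of $C^1_{\star}\cup_{\xi^{\star}}C^2_{\star}$; in the proof of Lemme \ref{turnlemmaP} the orientation of $\gamma$ was fixed so that on $C_0$ the block of marked points carried by $C^2_{\star}$ appears in the order it has along $\R C^2_{\star}$, whence on $C_1$ it appears reversed. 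Thus $\Gamma(1)$ carries the cyclic order $z^1_1<\dots<z^1_{\ell}<z^1_{2d''}<z^1_{2d''-1}<\dots<z^1_{\ell+1}<\tilde{z}^1_{\ell}<\tilde{z}^1_{\ell-1}<\dots<\tilde{z}^1_1<z^1_{2d''+1}<\dots<z^1_{2d}$, and forgetting $\tilde{z}^1_1,\dots,\tilde{z}^1_{\ell}$ (resp. $z^1_1,\dots,z^1_{\ell}$) reads off the cyclic order of the $2d$ marked points of $\gamma(1)$ (resp. of $\tilde\gamma(1)$) on $\R C_1\cong\RP^1$.

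Next I would introduce the permutation $\sigma$ of $[2d]$ with $\sigma(i)=\ell+i$ on $\{1,\dots,2d''-\ell\}$, $\sigma(i)=2d''+1-i$ on $\{2d''-\ell+1,\dots,2d''\}$ and $\sigma(i)=i$ on $\{2d''+1,\dots,2d\}$, and check the following: the configuration $(z^1_{\sigma(1)},\dots,z^1_{\sigma(2d)})\in(\RP^1)^{2d}\setminus Diag_{2d}$ lies in the same connected component as the configuration $(\tilde{z}^1_1,\dots,\tilde{z}^1_{\ell},z^1_{\ell+1},\dots,z^1_{2d})$ of marked points of $\tilde\gamma(1)$ --- equivalently, both give the cyclic order of indices $2d'',2d''-1,\dots,2,1,2d''+1,\dots,2d$. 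This is a short but careful verification using the cyclic orders of the first step (substitute $k=\sigma(i)$ in the cyclic order of $\gamma(1)$ and compare with that of $\tilde\gamma(1)$); I expect it to be the only delicate point, everything else being a transcription of Lemme \ref{turnlemmaP}. With it, there is a path $c^1:[0,1]\to(\RP^1)^{2d}\setminus Diag_{2d}$ joining the marked points of $\tilde\gamma(1)$ to $(z^1_{\sigma(1)},\dots,z^1_{\sigma(2d)})$, keeping the $i$-th coordinate equal to $z^1_i$ whenever $\sigma(i)=i$.

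Then, as in the proof of Lemme \ref{turnlemmaP}: by genericity $\gamma(1)$ corresponds to an equilibrated immersion with source $C_1\cong\CP^1$ (in particular a regular point of the evaluation), so $\Nua\cong\OC(2d-1)\oplus\OC(2d-1)$, and I would fix the identification $H^0_{\R}(\CP^1,\OC(2d-1))\cong R_{2d-1}[X,Y]$ on each summand, under which $H^0_{\R}(C_1,\mathcal{N}_{u_1,-\hat z})$ is the subspace of pairs of polynomials vanishing on the $(2d-1)$-point configuration $\hat z$. The path $c^1$ induces a homotopy of bases $h^1_t$ of $R_{2d-1}[X,Y]$, and $\mathbf{h}^1=h^1\oplus h^1$ carries each pair $(\tilde{f}^1_i,\tilde{g}^1_i)$ --- a basis of $H^0_{\R}(C_1,\mathcal{N}_{u_1,-\ob^{2d}_i(\underline{\tilde{z}}^1)})$ by Proposition \ref{decompfib} --- to a basis $(\hat{f}^1_i,\hat{g}^1_i)$ of $H^0_{\R}(C_1,\mathcal{N}_{u_1,-\ob^{2d}_{\sigma(i)}(\underline{z}^1)})$, the $4d$ vectors remaining a basis of $H^0_{\R}(C_1,\Nua)$ at all times because the $2d$ polynomials $\prod_{k\ne\sigma(i)}(X - z^1_k Y)$ are a basis of $R_{2d-1}[X,Y]$. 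Since $\sigma(i)$ is $\ell+i$, $2d''+1-i$, $i$ on the three ranges, these are exactly the sheaves named in items (2), (3), (4), and $\mathbf{h}^1$ provides the homotopy required in item (1).

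Finally I would perform, inside each two-dimensional space $H^0_{\R}(C_1,\mathcal{N}_{u_1,-\ob^{2d}_{\sigma(i)}(\underline{z}^1)})$ (a sum of one line in each line subbundle of $\Nua$), a further homotopy within $\mathrm{GL}_2(\R)$ taking $(\hat{f}^1_i,\hat{g}^1_i)$ to a split pair $(f^1_i,g^1_i)$, with $f^1_i$ in the first line subbundle and $g^1_i$ in the second; carried out simultaneously for all $i$ this keeps the $4d$-tuple a basis of $H^0_{\R}(C_1,\Nua)$, and concatenated with the previous homotopy it preserves item (1). Keeping from $\overline{\mathcal{B}}_1$ the $2d$ generators $e^1_i\in T_{z^1_i}C_1$ unchanged, the resulting $6d$-tuple $\mathcal{B}_1=\basa$ is then a model basis of $\TRMa$ in the sense of Définition \ref{modelP} with all the stated properties; when $2d''-\ell=0$ the first range is empty and item (2) is vacuous, as agreed. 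As noted, the crux is the second step --- establishing that $\sigma$ is the correct permutation, i.e. the comparison of the two cyclic orders coming from the opposite smoothings.
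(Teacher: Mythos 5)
Your proposal is correct and follows essentially the same route as the paper: the same cyclic order of $\underline{z}^1\cup\underline{\tilde z}^1$ at $\gamma(1)$, the same path $c^1$ in $(\RP^1)^{2d}\setminus Diag_{2d}$ (your permutation $\sigma$ is exactly the reindexing in the paper's target configuration $(z^1_{\ell+1},\dots,z^1_{2d''},z^1_{\ell},\dots,z^1_{1},z^1_{2d''+1},\dots,z^1_{2d})$), and the same induced homotopy $\mathbf{h}^1=h^1\oplus h^1$ of bases of $H^0_{\R}(\CP^1,\OC(2d-1)^{\oplus2})$ defining $(f^1_i,g^1_i)=\mathbf{h}^1_1(\tilde f^1_i,\tilde g^1_i)$. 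The only differences are cosmetic: you justify the cyclic order via the two real smoothings where the paper says \og quitte \`a restreindre $\gamma$\fg, and you add a final splitting homotopy that is already ensured by taking $h^1$ diagonally on the two line summands.
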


\begin {proof}
Quitte à restreindre $\gamma$, l'ordre cyclique des points $\underline{z}^1 \cup \underline{\tilde{z}}^1$ est $z^1_1<\dots<z^1_{\ell}<z^1_{2d''}<\dots<z^1_{\ell+1}<\tilde{z}^1_{\ell}<\cdots<\tilde{z}^1_{1}<z^1_{2d''+1}<\dots<z^1_{2d}$ (voir figure \ref{config1}).
\begin{figure}[htp]
\begin{center}
\input{config1P.pstex_t}
\end{center}
\caption{Au point $\gamma(1)$ : $d=3$, $\ell=3$}\label{config1}
\end{figure}
On choisit des représentants $(z^1,u_1)$ et $(\tilde{z}^1,u_1)$ dans $(\RP^1)^{2d}×\RMor^d $ respectivement de $\mapa$ et $\mapatd$ de sorte que $z^1_i=\tilde{z}^1_i$ pour $i$ de $\ell+1$ à $2d$. D'après l'ordre cyclique sur les points $\underline{z}^1$ et $\underline{\tilde{z}}^1$, on peut construire un chemin $c^1$ dans $(\RP^1)^{2d} \setminus Diag_{2d}$ qui relie $\underline{\tilde{z}}^1$ à $(z^1_{\ell+1},\dots,z^1_{2d''},z_{\ell}^1,\dots,z_{1}^1,z_{2d''+1}^1,\dots,z_{2d}^1)$ (voir figure \ref{chap4}).
\begin{figure}[htp]
\begin{center}
\input{chap4.pstex_t}
\end{center}
\caption{Description de $c^1$ et $\underline{z}^1\cup \underline{\tilde{z}}^1 \in (\RP^1)^{2d+\ell}$}\label{chap4}
\end{figure} Puisque $u_1$ est une immersion équilibrée, le fibré $\Nua$ est isomorphe au fibré $\OC(2d-1) \oplus \OC(2d-1)$. De plus, d'après la construction, les éléments $\tilde{f}^1_i$ et $\tilde{g}^1_i$ sont dans $H^0_{\R}(\CP^1,\OC(-\widehat{\tilde{z}^1_{i}}))$ où $\widehat{\tilde{z}^1_{i}}$ désigne la collection de points marqués $\underline{\tilde{z}}^1$ dont on a ôté le $i$-ème. Par suite, on définit comme en \ref{homotoppoint} une homotopie de base, notée ${\bf h}^1_{t \in [0,1]}$, dans $H^0_{\R}(\CP^1,\OC(2d-1)^{\oplus2})$ telle que ${\bf h}^1_0=id$ et
\begin{itemize}
\item ${\bf h}^1_1(\tilde{f}^1_i,\tilde{g}^1_i) \in H^0_{\R}(\CP^1,\OC(-\widehat{z^1_{\ell+i}})^{\oplus2}))$  pour $i \in \{1,\dots,2d''-\ell\}$ ;
\item ${\bf h}^1_1(\tilde{f}^1_i,\tilde{g}^1_i)\in H^0_{\R}(\CP^1,\OC(-\widehat{z^1_{2d''+1-i}})^{\oplus2})$ pour $i \in \{2d''-\ell+1,\dots,2d''\}$ ;
\item ${\bf h}^1_1(\tilde{f}^1_i,\tilde{g}^1_i) \in  H^0_{\R}(\CP^1,\OC(-\widehat{z^1_{i}})^{\oplus2})$ pour $i \in \{2d''+1,\dots,2d\}$.
\end{itemize}
On pose $(f^1_i,g^1_i)= {\bf h}^1_1(\tilde{f}^1_i,\tilde{g}^1_i)$ pour $i$ dans $[2d]$ et on obtient le résultat.
\end{proof}

\subsubsection{Cas $d'=0$}\label{degal0}\label{noneff}

La situation où $\mapr$ appartient à $D(A,B,0,d)$ nécessite une étude différenciée. En effet, l'application $u$ contracte alors une branche de $C$ et le lemme \ref{propkero} comme les propositions \ref{decomptan} et \ref{decompfib} ne sont plus valables en ce point.
\paragraph{Chemins.}
Soit $\R \mathcal{D}_{0,{\ell}}^d$ une composante pseudo-connexe de $\R\mathcal{K}_{0,{\ell}}^d$ qui soit de codimension un dans $\RMc$ (nécessairement ${\ell}$ est au moins deux par stabilité.) On choisit un point générique $\mapr$ dans $B_{\star} \cap \K$ tel que, quitte à changer l'indexation, les points $\{z^{\star}_1,\dots,z^{\star}_{{\ell}}\}$ sont sur la branche de classe nulle et l'ordre cyclique sur la branche de classe non nulle est $z^{\star}_{{\ell}+1}<\dots<z^{\star}_{2d}$. On définit un chemin de référence $\gamma : t \in [0,1] \mapsto (C_t,z^t,u_t) \in \RMc^*$ transverse à $\K$ en $\mapr$ au paramètre $t_{\star}$ mais dans le \og sens\fg\ des applications. C'est-à-dire que seuls les points marqués dépendent du paramètre $t$, l'application $u_t$ fixant l'image $u_0(\CP^1)=u_{t}(\CP^1)=u_{\star}(C_{\star})=u_1(\CP^1)$ pour tout $t$ dans $[0,1]$. De plus, pour simplifier on exige que $\gamma(0)=\gamma(1)$ après renversement dans l'ordre cyclique $u_0(z^0_1)=u_1(z^1_1),\dots,u_0(z^0_{{\ell}})=u_1(z^1_{{\ell}})$. De cette façon, $\mapa$ peut s'écrire $(\CP^1,z^0_{{\ell}},\dots,z^0_1,z^0_{{\ell}+1},\dots,z^0_{2d},u_0)$. Par généricité, l'application $u_{\star}$ peut être choisie équilibrée et par suite $u_0=u_1=u_{\star}|_{C_{\star}^2}$ sont toutes équilibrées.

\paragraph{Bases.}
On construit une base modèle ordonnée de $\TRMd$ en se donnant $4d$ éléments $\{f^0_i,g^0_i\}_{i=1,\dots,2d}$ de sorte que chaque couple $(f_i,g_i)$ se projette sur une base de $H_{\R}^0(\CP^1,\Nuid)$ qui respecte la décomposition du fibré normal $\Nud = \OC(2d-1) \oplus \OC(2d-1)$. Par le même procédé qu'en \ref{basd} on construit une base modèle ordonnée positive $\mathcal{B}_0=(e_1,\dots,e_{2d},f_1,g_1,\dots,f_{2d},g_{2d})$ d'après le choix d'une orientation $\mathfrak{o}_{\RA_0}$ sur la partie réelle de $A_0=u_{0}(\CP^1)$.

\paragraph{Trivialisation.}\label{orient0}
On construit une trivialisation de $T|_{\gamma}{\RMc}$ issue de $\mathcal{B}_0$ qui soit tautologique sur les $4d$ derniers vecteurs. C'est-à-dire que le long du chemin on trivialise le sous-fibré défini par les vecteurs $(f_1,g_1,\dots,f_{2d},g_{2d})$ de la base modèle $\mathcal{B}_0$ en se donnant $4d$ sections constantes $t \in [0,1] \mapsto f^0_i,g^0_i \in H^0(\CP^1,u_0^*T_{\CP^3})$ vu que l'application $u_t=u_0$ est constante. Pour la partie de $T|_{\gamma}{\RMc}$ engendrée par $\Ker d\ob$ on ne peut pas définir les vecteurs $e_i^t$ comme dans le cas $d'$ non nul car $\ker d|_{\mapr}\ob \cap \ker d|_{\mapr}\Rev \neq \{0\}$. Puisque l'application $u_0$ est constante le long du chemin, on peut décrire la courbe universelle au-dessus de $\gamma$ comme l'espace des déformations de la source. Considérons le produit $[0,1] ×\CP^1 ×\{u_0\}$ et les $2d$ sections $t \in [0,1] \mapsto z_i^t \in \CP^1$ définies par le chemin $\gamma$. La courbe universelle $\mathcal{U}^d_{2d}(\CP^3)$ au-dessus de $\Mc^*$ restreinte à ${\gamma}$ est l'éclaté $\Bl_{\xi}([0,1] ×\CP^1)$ du produit $[0,1] \times \CP^1$ en l'unique point de concours $(t_{\star},\xi)$ des ${\ell}$ premières sections (voir \cite{F-P}).
\begin{figure}[htp]
\begin{center}
\input{Ugamma.pstex_t}
\end{center}
\caption{Description de la courbe universelle $\mathcal{U}^d_{2d}(\CP^3)|_{\gamma}$ ($d'=0$, ${\ell}=3$)}\label{Ugamma}
\end{figure}\\
On définit ainsi une trivialisation de base $(e_1^t,\dots,e_{2d}^t)_{t \in [0,1]} \subset \Ker d\ob$ dans $\Bl_{\xi}([0,1] \times \CP^1)$ issue de $(e_1,\dots,e_{2d})_{t=0}$. Le résultat de cette construction est une base modèle non ordonnée $\mathcal{B}_1$ telle qu'on peut la décrire par la donnée $\mathcal{B}_1 \equiv (-e_{{\ell}},\dots,-e_1,e_{{\ell}+1},\dots,e_{2d},f_1,g_1,\dots,f_{2d},g_{2d})$ (voir figure \ref{Ugamma}).

\subsubsection{Évaluation de bases}\label{evalbase}
On compare les orientations induites par les bases  $\mathcal{B}_0$ et $\mathcal{B}_1$ dans une trivialisation du fibré tangent à l'image du chemin de référence par le morphisme d'évaluation. Lorsque ces orientations coïncident, on en déduit que $\epsilon(\R\D)=0$, dans le cas contraire on obtient $\epsilon(\R\D)=1$.

\paragraph{Trivialisation à l'image. Cas $d'\neq0$.}\label{trivRX}
On se place dans une trivialisation de $T_{(\RP^3)^{2d}}$ le long de $\gamma_*$ qui soit compatible avec le morphisme d'évaluation. On projette la base modèle ordonnée positive du paragraphe \ref{basd} sur une base en $\gamma_*(0)$. Reprenons la trivialisation $\phi$ définie en \ref{phi} ayant pour sections tautologiques $e_i : \gamma \subset \RMc \to \Ker d\ob_i^{2d}|_{\gamma}$ pour $i$ dans $[2d]$. Puisque $d\Rev$ est injective sur $\Ker d\ob|_{\gamma}$, on construit une famille libre de $2d\ (=\dim \Ker d\ob)$ sections $v_i: \gamma_* \subset (\RP^3)^{2d} \to T|_{\gamma_*}(\RP^3)^{2d}$ comme images des sections $e_i$. C'est-à-dire que si on écrit ${v}_i^{t}$ pour $v_i(\gamma_*(t))$ alors ${v}_i^t=d|_{\gamma_*(t)}\Rev(e_i^t) \in T^{(i)}_{\gamma_*(t)}\RP^3$ pour $t$ dans $[0,1]$. On complète la base en choisissant pour chaque $v_i^0$ un couple de vecteurs $(w_i^0,\omega_i^0)$ dans $T^{(i)}_{\gamma_*(0)}\RP^3$ de sorte que $(\overline{w}_i^0,\overline{\omega}_i^0)=d|_{\gamma_*(0)}\Rev(f_i^0,g_i^0)$ où $\overline{w}_i^0$ représente la classe au quotient de $w_i^0$ dans la suite exacte $$\xymatrix{0 \ar[r]& <v^0_i>_{\R} \ar[r]& T^{(i)}_{\gamma_*(0)}\RP^3 \ar[r]& T^{(i)}_{\gamma_*(0)}{\RP^3}/<v^0_i>_{\R} \ar[r]& 0.}$$ Ainsi chaque triplet $(v_i^0,w_i^0,\omega_i^0)$ est une base positive de $T_{\gamma_*(0)}\RP^3_{(i)}$. On considère une trivialisation de $T|^{(i)}_{{\gamma}_*}\RP^3$ et on complète le $2d$-uplet $(v^t_1,\dots,v^t_{2d})_{t \in [0,1]}$ par $4d$ sections $\{w_i^t,\omega_i^t ; t \in [0,1]\}_{i \in [2d]}$ issues de $(w_i^0,\omega_i^0)$. On obtient une section de bases positives $\mathcal{V}_t=({v}_1^t,\dots,{v}_{2d}^t,{w}_1^t,{\omega}_1^t,\dots,{w}_{2d}^t,{\omega}_{2d}^t)$ de $T_{(\RP^3)^{2d}}$ le long du chemin $\gamma_*$. De plus $\mathcal{V}_0$ est égal à $d|_{\gamma(0)}\Rev(\mathcal{B}_0)$ et $(d\Rev)^{-1}(\mathcal{V}_1)$ est une base modèle ordonnée positive de $\TRMa$.

\paragraph{Trivialisation à l'image. Cas $d'=0$.}\label{trivX0}

On prend la trivialisation la plus évidente de $T_{(\RP^3)^{2d}}$ le long du chemin $\gamma_*$ qui soit issue de $d|_{\gamma(1)}\Rev(\mathcal{B}_0)$ et qui respecte l'inclusion $T_{u(\underline{z})}\R u_0(\CP^1) \subset T_{u(\underline{z})}\RP^3$. C'est-à-dire une section de bases $\mathcal{V}_t=({v}_1^t,\dots,{v}_{2d}^t,{w}_1^t,{\omega}_1^t,\dots,{w}_{2d}^t,{\omega}_{2d}^t)$ telle que $\mathcal{V}_0$ soit égale à l'image de $d|_{\gamma(0)}\Rev(\mathcal{B}_0)$ et $\mathcal{V}_1$ soit équivalente à la base de $T_{\gamma_*(1)}\RP^3$ donnée par $({v}^0_{\ell},\dots,{v}^0_1,{v}^0_{\ell+1}, \dots {v}_{2d}^0,{w}_1^0,{\omega}_1^0,\dots,{w}_{2d}^0,{\omega}_{2d}^0)$.

\subsubsection{Matrices de transition}\label{mattrans}

On décrit la base $d|_{\gamma(1)}\Rev(\mathcal{B}_1)$ dans la base $\mathcal{V}_1$ sous forme matricielle. Pour simplifier l'expression et le calcul du déterminant on se limite à définir une matrice semblable. La proposition \ref{matnonnul} étudie le cas $d'$ non nul et la proposition \ref{matnul} le cas où $d'$ est nul.

\begin{prop}\label{matnonnul}
Suivant les trivialisations définies en \ref{trivial} et \ref{trivRX}, l'expression matricielle de la base $d_{\gamma(1)}\Rev(\mathcal{B}_1)$ dans la base $\mathcal{V}_1$ est semblable à la matrice\[\mathcal{B}_1|_{\mathcal{V}_1}=\left(\begin{array}{c|c|c|c|c}-I_{\ell} & 0 & 0 & 0 & 0\\ \hline 0 & I_{2d-\ell} & 0 & 0 & 0 \\ \hline 0 & 0 & 0 & \mathcal{J}_{\ell} & 0\\ \hline 0 & 0 & I_{2(2d''-\ell)} & 0 & 0 \\ \hline 0 & 0 & 0 & 0 & I_{2(2d')}\end{array}\right)\]où $\mathcal{J}:=\left(\begin{array}{c|c|c|c}0 & \cdots & 0 & I_2 \\ \hline 0 & \cdots & I_2 & 0\\ \hline \vdots & {\mathinner{\mkern2mu\raise1pt\hbox{.}\mkern2mu\raise4pt\hbox{.}\mkern2mu\raise7pt\hbox{.}\mkern1mu}} & \vdots & \vdots \\ \hline I_2 & \cdots & 0 & 0\end{array}\right)$ la matrice de permutation cyclique par paires.
\end{prop}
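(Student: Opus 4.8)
The plan is to track the three tautological frames --- the reference frame $\overline{\mathcal{B}}_t$ coming from the trivialization $\phi$ along $\gamma$ (\ref{trivial}), the comparison frame $\tilde{e}_i,\tilde{f}_i,\tilde{g}_i$ along $\tilde\gamma$, and the image frame $\mathcal{V}_t$ along $\gamma_\ast$ (\ref{trivRX}) --- and to read off, block by block, how the model base $\mathcal{B}_1$ of Lemma~\ref{basa} expresses itself in $\mathcal{V}_1$ at the endpoint. The upshot of \ref{trivRX} is that $\mathcal{V}_t$ is built so that $\mathcal{V}_1 = d|_{\gamma(1)}\Rev(\overline{\mathcal{B}}_1)$ \emph{in the ordering carried by the tautological sections}, hence the matrix we want is really the change of basis between $\mathcal{B}_1$ and $\overline{\mathcal{B}}_1$ inside $\TRMa$, pushed down to $\RP^3$-blocks via the splitting \eqref{decomp}; since $d\Rev$ is a linear isomorphism adapted to \eqref{decomp} this push-down changes nothing up to similarity.

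First I would treat the $\Ker d\ob$ part, i.e.\ the first $2d$ columns. The sections $e_i:\gamma\to\Ker d\ob_i^{2d}$ are the same tautological sections used to build $v_i$, and the only discrepancy between $\overline{\mathcal{B}}_1$ and the model $\mathcal{B}_1$ on this part is the reindexing-with-sign dictated by the cyclic reordering of $\underline z^1$ recorded in Lemma~\ref{basa} and Figure~\ref{config1}: the $\ell$ marked points that travelled onto the other branch return with reversed cyclic order, which on each $T_{z_i}\RP^1\cong\R$ is multiplication by $-1$ composed with the order-reversing permutation of $\{1,\dots,\ell\}$. Thus the top-left $\ell\times\ell$ block is $-I_\ell$ and the next $(2d-\ell)\times(2d-\ell)$ block is $I_{2d-\ell}$; the order-reversing permutation is absorbed when we pass to a \emph{similar} matrix (conjugating by the corresponding permutation matrix), which is exactly why the statement only claims similarity. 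The sign $-I_\ell$ is forced, not chosen: it is the content of the picture $\mathcal{B}_1\equiv(-e_\ell,\dots,-e_1,e_{\ell+1},\dots)$, transported here from the $d'=0$ discussion but valid verbatim since the tangent direction to $\RP^1$ at a marked point that has swept once around and come back is reversed.

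Next I would do the $H^0_{\R}(C_1,\Nua)$ part, the last $4d$ columns. By Lemma~\ref{basa} the pairs $(\tilde f^1_i,\tilde g^1_i)$ and $(f^1_i,g^1_i)$ project to \emph{homotopic} bases of $H^0_{\R}(C_1,\Nua)$, so in the trivialization of $\Ker d\hat{ev}_i/\Ker d\ob_i^{2d}$ along $\tilde\gamma$ --- equivalently along $\gamma$, since the two paths carry the same curve and the same non-moved marked points --- the $(f,g)$-pairs differ from the $(\tilde f,\tilde g)$-pairs by a path of invertible $2\times2$ matrices, i.e.\ up to similarity they contribute identity blocks. But the \emph{indexing} of which $\Nui$ each pair lands in is permuted exactly as spelled out in the four bullets of Lemma~\ref{basa}: the block of size $2(2d''-\ell)$ goes to itself ($I_{2(2d''-\ell)}$), the block of size $2(2d')$ goes to itself ($I_{2(2d')}$), and the remaining $\ell$ pairs --- indices $\{2d''-\ell+1,\dots,2d''\}$ sent to $\{2d''+1-i\}$, i.e.\ reversed, together with the $\ell$ returning pairs --- get cyclically shuffled by pairs, which is precisely the block $\mathcal{J}_\ell$. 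Assembling the five diagonal blocks in the order $(e$'s that flipped$)$, $(e$'s that didn't$)$, $(f,g$'s permuted cyclically$)$, $(f,g$'s fixed on the $C_B$-side$)$, $(f,g$'s fixed on the $C_A$-side$)$ gives the displayed block matrix.

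The main obstacle is bookkeeping, not geometry: one must verify that the \emph{same} tautological trivializations underlie $v_i$, $e_i$ and the comparison sections, so that no hidden sign or rotation is introduced when passing from $\TRMa$ to $(\RP^3)^{2d}$ --- this is what \ref{trivial} and \ref{trivRX} were set up to guarantee, and it is where one should be careful that $\mathcal{V}_1$ is genuinely $d\Rev(\overline{\mathcal{B}}_1)$ and not $d\Rev(\mathcal{B}_1)$. The second delicate point is pinning down that the cyclic reordering of the $\ell$ marked points produces $\mathcal{J}_\ell$ (cyclic permutation by pairs) and not some other permutation: this follows by reading Figure~\ref{chap4}, where $c^1$ is constructed so that $(\tilde\ell+1,\dots,\tilde\ell+k'',\tilde\ell,\dots,\tilde 1)$ is carried to $(\ell+1,\dots,\ell+k'',\ell,\dots,1)$, a composition of two order-reversals whose effect on paired blocks is the cyclic-by-pairs permutation $\mathcal{J}$. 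Once the permutation is identified the determinant is immediate and equal to $(-1)^\ell\det\mathcal{J}_\ell$, which is what the sign computation in the sequel will use; but that is for the next step, not this one.
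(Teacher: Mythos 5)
Your final matrix is the right one, and your overall architecture --- splitting along \eqref{decomp}, reading the permutation off the four bullets of Lemma \ref{basa}, absorbing the reordering into a similarity --- matches the paper's. But the way you produce the $\ell$ minus signs is not a valid computation of the transition matrix to $\mathcal{V}_1$ as that base is actually defined in \ref{trivRX}, and it skips the step that is the technical heart of the paper's proof. In \ref{trivRX} the first $2d$ vectors of $\mathcal{V}_t$ are \emph{defined} by ${v}_i^t=d|_{\gamma_*(t)}\Rev(e_i^t)$, where the $e_i$ are the tautological sections of $\Ker d\ob_i^{2d}$ along $\gamma$ that also furnish the $e$-part of $\mathcal{B}_1$; so the $e$-block of the transition matrix is literally the identity, not $-I_\ell\oplus I_{2d-\ell}$. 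Your claim that the reversal of the tangent direction, ``transported verbatim from the $d'=0$ discussion'', forces $-I_\ell$ there compares the $e^1_i$ to the orientation $\mathfrak{o}_{\R A_1}$ of the image curve, which is not the reference $\mathcal{V}_1$. Dually, your claim that the $(f,g)$-blocks contribute identities ``because the bases are homotopic in $H^0_\R(C_1,\Nua)$'' is exactly what fails relative to $\mathcal{V}_1$: the paper's Lemma \ref{signarrivee} shows that for $i$ in $[\ell]$ the triple $(e^1_i,f^1_{2d''+1-i},g^1_{2d''+1-i})$ has sign $(-)$, i.e.\ those $\ell$ blocks have \emph{negative} determinant (the matrices $\Lambda^i_-$ with $\det\Lambda^i_-<0$), whereas the $(w_i,\omega_i)$-part of $\mathcal{V}_1$ is completed positively. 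A homotopy of the $4d$-tuple inside $H^0_\R(C_1,\Nua)$ controls only the global sign of the $4d\times 4d$ block, not the signs of the individual $2\times2$ blocks against $\mathcal{V}_1$; and your identification $\mathcal{V}_1=d\Rev(\overline{\mathcal{B}}_1)$ is not established, since the $(w,\omega)$-sections are extended by a trivialization of $T^{(i)}\RP^3$ along $\gamma_*$, not as images of the $(\tilde f,\tilde g)$-sections, which live along the different path $\tilde\gamma$.

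The reason you still land on the correct matrix is that the two misattributions compensate: the paper first locates the $\ell$ negative determinants in the normal-bundle blocks (Lemma \ref{signarrivee}, via the orientations of $e^1_i$ and $\tilde e^1_i$ relative to $\mathfrak{o}_{\R A_1}$, the transversality of $\gamma$ to the boundary, and the homotopy ${\bf h}^1$ of Lemma \ref{basa}) and only then \emph{transfers} them onto the first $\ell$ vectors $e_i$ as an equivalence of bases --- which is precisely the $-I_\ell$ you wrote down directly. So the determinant, hence the contribution to $w_1$, comes out right; but as a proof of the proposition the argument is missing its key ingredient, namely the sign computation along the comparison path that pins down which $2\times2$ blocks are orientation-reversing.
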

\begin{lemm}\label{signarrivee}
La base modèle $\mathcal{B}_1=\basa$ de l'espace tangent $\TRMa$ définie en \ref{B1} vérifie
\begin{enumerate}
\item $\angle(e^1_i,f^1_{2d''-+1-i},g^1_{2d''+1-i})=(-)$ pour $1\leq i \leq \ell$ ;
\item $\angle(e_{i}^1,f_{i-\ell}^1,g_{i-\ell}^1)=(+)$ pour $\ell<i\leq2d''$ ;
\item $\angle(e_{i}^1,f_{i}^1,g_{i}^1)=(+)$ pour $2d''<i\leq 2d$.
\end{enumerate}
\end{lemm}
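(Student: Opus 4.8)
The plan is to track, vector by vector, the three ingredients that determine the sign $\angle(e^1_i,f^1_{\sigma(i)},g^1_{\sigma(i)})$ along the reference path $\gamma$: the tautological section $e_i$ of $\Ker d\ob^{2d}_i$, the trivialisation $\overline{\mathcal{B}}_t$ built in \ref{trivial}, and the final homotopy ${\bf h}^1$ of Lemma \ref{basa} that sends $(\tilde f^1_i,\tilde g^1_i)$ to $(f^1_i,g^1_i)$. Since the starting base $\mathcal{B}_0$ is model ordered and positive by construction (\ref{basd}), we have $\angle(e^0_i,f^0_i,g^0_i)=(+)$ for every $i\in[2d]$. The point is to propagate this positivity to $t=1$ and read off how the reindexing of the marked points permutes the triples; the two sign changes in item (1) will come entirely from the antipodal flip $e^1_i\equiv -e_i$ forced by the cyclic reordering $z^1_1<\dots<z^1_\ell<z^1_{2d''}<\dots<z^1_{\ell+1}$ described in the proof of Lemma \ref{basa} (cf. the $-I_\ell$ block that already appears in Proposition \ref{matnonnul}).

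First I would fix the cyclic order of $\underline{z}^1\cup\underline{\tilde z}^1$ on $\R C_1\simeq\RP^1$ as in figure \ref{config1}, and recall from the construction of $\overline{\mathcal{B}}_t$ that along $\gamma$ each $f_i,g_i$ is a tautological section of $\Ker d\hat{ev}_i/\Ker d\ob^{2d}_i$, so that $(\overline{\mathcal{B}}_t)$ stays a positive standard base for all $t$ provided it is positive at $t=0$ — which it is, being $\mathcal{B}_0$. Next I would invoke Lemma \ref{turnlemmaP} at the departure end and its analogue at the arrival end: the same homotopy argument (via the path $c^1$ in $(\RP^1)^{2d}\setminus Diag_{2d}$ and the model $R_{2d-1}[X,Y]^{\oplus 2}$) shows that replacing $(\tilde f^1_i,\tilde g^1_i)$ by ${\bf h}^1_1(\tilde f^1_i,\tilde g^1_i)=(f^1_i,g^1_i)$ does not change the sign of the relevant triple, because ${\bf h}^1$ is a homotopy of bases inside the real sections of $\OC(2d-1)^{\oplus 2}$ and hence preserves orientation of $H^0_\R(\CP^1,\Nua)$. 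The only remaining effect is the matching of indices: the section $e^1_i$ of $\Ker d\ob^{2d}_i$ at $\gamma(1)$ corresponds, through the reordering, to $e^0_{i}$ for $i>\ell$ but to $-e^0_{i}$ for $1\le i\le\ell$, and correspondingly $(f^1_i,g^1_i)$ sits over the $i$-th marked point which, for $1\le i\le\ell$, has been relabelled from position $2d''+1-i$, for $\ell<i\le2d''$ from position $i-\ell$, and for $i>2d''$ is unchanged.

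Assembling these: for $1\le i\le\ell$ the sign is $\angle(-e^0_{\cdot},f^0_{\cdot},g^0_{\cdot})=-\angle(e^0_\cdot,f^0_\cdot,g^0_\cdot)=(-)$, giving item (1); for $\ell<i\le 2d''$ and for $2d''<i\le2d$ no antipodal flip occurs, so the sign stays $(+)$, giving items (2) and (3). I expect the main obstacle to be the bookkeeping of the index shift: one has to be scrupulous that the cyclic order at $\gamma(1)$ (figure \ref{config1}) really produces the permutation $i\mapsto 2d''+1-i$ on $\{1,\dots,\ell\}$ together with the antipodal sign on exactly those $e_i$, and that the branch-switching under $\ob^{2d+\ell}_{1,\dots,\ell}\circ\Gamma$ does not introduce an extra flip on the $f_i,g_i$ part — for this one checks, as in Lemma \ref{basa}, that ${\bf h}^1$ can be taken with ${\bf h}^1_0=\mathrm{id}$ and staying within a single connected component of bases, so that the orientation contribution of the normal-bundle part is locally constant in $t$.
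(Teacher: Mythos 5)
Your plan follows the paper's own argument: the comparison path and the homotopy $\mathbf{h}^1$ carry the positivity of each pair $(f_i,g_i)$, and the whole sign discrepancy is localised in the reversal of the tangent vectors $e^1_i$ for $i\in[\ell]$ on the flipped branch, combined with the index permutation $i\mapsto 2d''+1-i$ (resp.\ $i\mapsto i-\ell$) read off from the cyclic order at $\gamma(1)$ --- exactly as in the paper's proof. One point to tighten: the invariance of each sign $\angle$ under $\mathbf{h}^1$ does not follow from $\mathbf{h}^1$ preserving the orientation of the $4d$-dimensional space $H^0_{\R}(\CP^1,\Nua)$ (a global statement that says nothing about individual triples), but from the fact that along $c^1$ the marked points stay off the diagonal, so each triple $(\tilde e_i^1(t),\mathbf{h}^1_t\tilde f^1_i,\mathbf{h}^1_t\tilde g^1_i)$ evaluates to a non-degenerate frame of $T\RP^3$ for every $t$ and its sign is therefore locally constant.
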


\begin{proof}
On fixe une orientation $\mathfrak{o}_{\R A_1}$ sur la partie réelle de $A_1=u_1(\CP^1)$ de sorte que les vecteurs $\{e^1_i ; i=\ell+1,\dots,2d''\}$ (c.-à-d. les éléments de la base issues de $T_{C_{\star}^2}$) soient positifs. Les vecteurs $\{e^1_i ; i=1,\dots,\ell\}$ et $\{\tilde{e}^1_i ; i=2d''+1,\dots,2d\}$ sont alors orientés négativement relativement à $\mathfrak{o}_{\R A_1}$ puisque le chemin $\gamma$ est transverse à la frontière. De même, en $\tilde{\gamma}(1)$ les vecteurs $\{\tilde{e}^1_i ; i=1,\dots,2d''\}$ sont orientés positivement relativement à $\mathfrak{o}_{\R A_1}$ alors que les vecteurs $\{\tilde{e}^1_i ; i=2d''+1,\dots,2d\}$ le sont négativement d'après le lemme \ref{maprtilde}. Reprenons la construction de $\mathcal{B}_1$ et considérons le relevé du chemin $c^1$ dans $(\RP^1)^{2d} × \RMor^d$ et l'homotopie de base $h^1$ de $H^0_{\R}(\CP^1,\OC(2d-1))$ associée par le lemme \ref{basa}. Une trivialisation de $T|_{c^1}\RM$ issue de $\tilde{\mathcal{B}}_1$ compatible avec la suite exacte (\ref{decomp}) et définie par cette homotopie fournit une base de $\TRMa$ dont les $4d$ derniers vecteurs sont $(f^1_1,g^1_1,\dots,f^1_{2d},g^1_{2d})$. On obtient le long de la première composante une trivialisation $(\tilde{e}_1^1(t),\dots,\tilde{e}_{2d}^1(t))_{t \in [0,1]}$ de $T|_{c^1}(\RP^1)^{2d}$ qui vérifie $\angle(\tilde{e}^1_i(1),f^1_i,g^1_i)=(+)$ car $\angle(\tilde{e}_i^1(0),\tilde{f}^1_i,\tilde{g}^1_i) = (+)$, pour $i$ dans $[2d]$. Cependant, pour $i$ dans $[\ell]$, $e^1_i$ étant orienté négativement relativement à $\mathfrak{o}_{\R A_1}$, il est opposé à l'orientation définie par $\tilde{e}_{i}^1(0)$. Or, cette orientation est aussi celle définie par $\tilde{e}^1_{2d''+1-i}(1) $ car $2d''+1-i$ est dans $[2d'']$. D'autre part, pour $i$ dans $\{\ell+1,\dots,2d''\}$, $e^1_i$ est orienté positivement relativement à $\mathfrak{o}_{\R A_1}$ et donc à l'orientation définie par $\tilde{e}_{i}^1(0)$ qui est aussi celle définie par $\tilde{e}_{i-\ell}^1(0)$ car $i-\ell$ est dans $[2d'']$. Enfin, pour $i$ dans $\{2d''+1,\dots,2d\}$, $e^1_i$ est orienté positivement relativement à $\mathfrak{o}_{\R A_1}$ et donc relativement à $\tilde{e}^1_i(1)$. Finalement, $\tilde{e}^1_i(1)= \lambda_+^i e_i^1$  si $i$ est dans $\{2d'',\dots,2d\}$ et $\tilde{e}^1_{i-\ell}(1)= \lambda_+^i e_i^1$  si $i$ appartient à $\{\ell+1,\dots,2d''\}$ où $\lambda_+^i \in \R_+^*$ alors que $\tilde{e}^1_{2d''+1-i}(1)=\lambda_-^i e_i^1$ avec $\lambda_-^i \in \R_-^*$ lorsque $i$ appartient à $[\ell]$. En conséquence, $(f^1_{2d''+1-i}(z^1_i),g^1_{2d''+1-i}(z^1_i))=\Lambda_-^i( \tilde{f}^1_i(\tilde{z}^1_i),\tilde{g}^1_i(\tilde{z}^1_i))$ pour $i$ dans $[\ell]$ et $(f^1_i(z^1_{i}),g^1_i(z^1_{i}))=\Lambda_+^i (\tilde{f}^1_i(\tilde{z}^1_i),\tilde{g}^1_i(\tilde{z}^1_i))$ pour $i$ dans $\{2d'',\dots,2d\}$, de même $(f^1_{i-\ell}(z^1_{i}),g^1_{i-\ell}(z^1_{i}))=\Lambda_+^i (\tilde{f}^1_i(\tilde{z}^1_i),\tilde{f}^1_i(\tilde{z}^1_i))$ pour $i$ dans $\{\ell+1,\dots,2d''\}$, avec $\det \Lambda_-^i<0$ et $\det \Lambda_+^i>0$, d'où le résultat.
\end{proof}
\begin{proof}[Démonstration de la proposition \ref{matnonnul}]
D'après le lemme \ref{signarrivee}, la base modèle $\mathcal{B}_1$ est équivalente à la donnée du $6d$-uplet suivant \begin{equation*}\begin{aligned}(e_1,\dots,e_{2d},-f_{2d''},g_{2d''},\dots&,-f_{1+2d''-\ell},g_{1+2d''-\ell},\\&f_1,g_1,\dots,f_{2d''-\ell},g_{2d''-\ell},f_{2d''+1},g_{2d''+1},\dots,f_{2d},g_{2d})\end{aligned}\end{equation*} où $(e_1,\dots,e_{2d},f_1,g_1,\dots,f_{2d},g_{2d})$ désigne une base modèle ordonnée positive de $T_{\gamma(1)}\RMc$ qui se projette sur $\mathcal{V}_1$ (voir \ref{trivRX}). Les premiers vecteurs de base $(e^1_1,\dots,e^1_{2d})$ étant tous positifs relativement à $\mathfrak{o}_{\RA_1}$ on peut décider qu'un des deux vecteurs $(f^1_i,g^1_i)$ pour $i$ dans $\{2d''-\ell+1,\dots,2d''\}$ doit porter un signe négatif et on choisit arbitrairement le premier. On en déduit l'expression d'une matrice semblable à la matrice de transition qui définit l'image de la base $\mathcal{B}_1$ par le morphisme d'évaluation réel dans la base $\mathcal{V}_1$. Pour cela, on remplace $\mathcal{B}_1$ par la donnée \begin{equation*}\begin{aligned}(-e_1,\dots,-e_{\ell},e_{\ell+1},\dots,e_{2d},&f_{2d''},g_{2d''},\dots ,f_{1+2d''-\ell},g_{1+2d''-\ell}, \\&f_1,g_1,\dots,f_{2d''-\ell},g_{2d''-\ell},f_{2d''+1},g_{2d''+1},\dots,f_{2d},g_{2d})\end{aligned}\end{equation*} qui lui est encore équivalente. Vu que $2d-2d''=2d'$ on obtient l'expression matricielle recherchée.
\end{proof}

\begin{prop}\label{matnul}
Cas $d'= 0$, suivant les trivialisations définies en \ref{orient0} et \ref{trivX0}, l'expression de la base $d_{\gamma(1)}\Rev(\mathcal{B}_1)$ dans la base $\mathcal{V}_1$ est semblable à la matrice \[\mathcal{B}_1|_{\mathcal{V}_1}=\left(\begin{array}{c|c|c|c}-I_{\ell} & 0 & 0 & 0 \\ \hline 0 & I_{2d-\ell} & 0 & 0\\ \hline 0 & 0 & \mathcal{J}_{\ell} & 0 \\ \hline 0 & 0 & 0 & I_{2(2d-\ell)}\end{array}\right).\]
\end{prop}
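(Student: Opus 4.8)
L'argument reprendra celui de la proposition \ref{matnonnul}, mais sera plus court car l'application $u_0=u_1=u_{\star}|_{C_{\star}^2}$ est \'equilibr\'ee et \emph{constante} le long de $\gamma$ : le r\^ole du lemme \ref{signarrivee} est ici tenu directement par la description de $\mathcal{B}_1$ \'etablie en \ref{orient0}, et aucune \'etude de signes s\'epar\'ee n'est requise. On partira des deux descriptions d\'ej\`a obtenues. D'une part, celle de la base mod\`ele non ordonn\'ee $\mathcal{B}_1\equiv(-e^0_{\ell},\dots,-e^0_1,e^0_{\ell+1},\dots,e^0_{2d},f^0_1,g^0_1,\dots,f^0_{2d},g^0_{2d})$ de $\TRMa$ : ses $4d$ derniers vecteurs sont les sections constantes $t\mapsto f^0_i,g^0_i\in H^0(\CP^1,u_0^*T_{\CP^3})$, et ses $2d$ premiers les sections tautologiques de $\Ker d\ob$ transport\'ees dans l'\'eclat\'e $\Bl_{\xi}([0,1]\times\CP^1)$ (figure \ref{Ugamma}), dont le passage de $\gamma(0)$ \`a $\gamma(1)$ renverse l'ordre cyclique des $\ell$ points marqu\'es port\'es par la branche de classe nulle et change le sens de leur direction tangente. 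D'autre part, celle de la base $\mathcal{V}_1\equiv(v^0_{\ell},\dots,v^0_1,v^0_{\ell+1},\dots,v^0_{2d},w^0_1,\omega^0_1,\dots,w^0_{2d},\omega^0_{2d})$ de $T_{\gamma_*(1)}(\RP^3)^{2d}$ construite en \ref{trivX0}, o\`u l'on a $(v^0_i,w^0_i,\omega^0_i)=d_{\gamma(0)}\Rev(e^0_i,f^0_i,g^0_i)$. Il restera \`a \'ecrire $d_{\gamma(1)}\Rev(\mathcal{B}_1)$ dans la base $\mathcal{V}_1$, bloc par bloc.

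Pour les $2d$ premi\`eres colonnes (partie $\Ker d\ob$), comme $u_t\equiv u_0$, l'image par $d\Rev$ d'un vecteur de $\Ker d\ob$ demeure dans la direction tangente \`a la courbe fixe $\R u_0(\CP^1)$ ; on comparera donc $(-d\Rev(e^0_{\ell}),\dots,-d\Rev(e^0_1),d\Rev(e^0_{\ell+1}),\dots,d\Rev(e^0_{2d}))$ au $2d$-uplet $(v^0_{\ell},\dots,v^0_1,v^0_{\ell+1},\dots,v^0_{2d})$. L'observation clef est que le renversement d'indices figurant dans $\mathcal{B}_1$ et celui figurant dans $\mathcal{V}_1$ codent le m\^eme croisement de la fronti\`ere et se compensent, si bien que ce bloc est diagonal : les vecteurs d'indice $\ell+1,\dots,2d$, qui ne passent pas par le point d'\'eclatement, se projetteront positivement sur $v^0_{\ell+1},\dots,v^0_{2d}$, d'o\`u $I_{2d-\ell}$ ; les $\ell$ premiers portent un signe $-$ (changement de sens \`a la travers\'ee), d'o\`u $-I_{\ell}$.

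Pour les $4d$ derni\`eres colonnes (partie $H^0(\CP^1,\Nua)$), on utilisera qu'au point $\gamma(1)$ le $i$-\`eme point marqu\'e est $z^0_{\ell+1-i}$ si $i\leq\ell$ et $z^0_i$ si $i>\ell$ : la section constante $f^0_i$, qui parmi les points marqu\'es ne s'annule qu'en $z^0_i$, se projette donc dans le facteur de la d\'ecomposition \eqref{decomp} index\'e par $\ell+1-i$ si $i\leq\ell$ et par $i$ sinon, et de m\^eme pour $g^0_i$. Rapport\'ee \`a l'ordre $(w^0_1,\omega^0_1,\dots,w^0_{2d},\omega^0_{2d})$ de $\mathcal{V}_1$, cette permutation des facteurs donnera sur ce bloc la matrice de permutation cyclique par paires $\mathcal{J}_{\ell}$ sur les $\ell$ premi\`eres paires et l'identit\'e $I_{2(2d-\ell)}$ sur les autres (les composantes \'eventuelles dans les directions tangentes n'apportant que des termes triangulaires, sans effet sur la classe de similitude). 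En rassemblant les quatre blocs et en r\'eordonnant, on conclura que la matrice de transition est semblable \`a celle annonc\'ee. Le point d\'elicat sera justement cette comptabilit\'e des indices : v\'erifier que les deux renversements se compensent sur la partie $\Ker d\ob$ alors qu'ils engendrent la permutation $\mathcal{J}_{\ell}$ sur la partie normale, ph\'enom\`ene qui provient de ce que les sections tautologiques $e^0_i$ et les sections constantes $f^0_i,g^0_i$ ne r\'eagissent pas de la m\^eme mani\`ere au croisement de la fronti\`ere.
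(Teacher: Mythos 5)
Votre argument est correct et suit essentiellement la d\'emonstration du texte : trivialisation tautologique sur les $4d$ derniers vecteurs, renversement de signe des sections $e_i$ pour $i\in[\ell]$ \`a la travers\'ee (d'o\`u $-I_{\ell}$ et $I_{2d-\ell}$, la r\'eindexation de $\mathcal{B}_1$ compensant celle de $\mathcal{V}_1$), et constat que la paire constante $(f^0_i,g^0_i)$ tombe dans le facteur de la d\'ecomposition index\'e par $\ell+1-i$ pour $i\le\ell$, ce qui produit exactement le bloc $\mathcal{J}_{\ell}$. Votre explicitation bloc par bloc et la remarque sur les termes triangulaires ne font que d\'etailler ce que la preuve du texte laisse implicite.
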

\begin{proof}
La base modèle $\mathcal{B}_1$ est le résultat d'une trivialisation qui est tautologique sur les $4d$ derniers vecteurs. De plus, chaque section $e^t_i$ de l'espace tangent à la source induit une orientation opposée à celle définie par $\mathfrak{o}_{\RA}$ pour $t>t_{\star}$ (en particulier en $t=1$) dès que $i$ est dans $[\ell]$ et sont tautologiques sinon. On peut donc décrire $\mathcal{B}_0$ comme une base modèle par la donnée $(-e_{\ell},\dots,-e_1,e_{\ell+1},\dots,e_{2d},f_1,g_1,\dots,f_{2d},g_{2d})$, d'après le choix du chemin. Cependant, dans cette description, les couples de vecteurs $(f_i,g_i)$ forment une base de $H^0_{c}(\CP^1,\mathcal{N}_{u,-\ob_{\ell+1-i}^{2d}(z)})$ (resp. de $H^0_{c}(\CP^1,\mathcal{N}_{u,-\ob_{i}^{2d}(z)})$) pour $i$ dans $[\ell]$ (resp. pour $i$ dans $\{\ell+1,\dots,2d\}$) d'après le choix du chemin. La base n'est donc pas ordonnée et la matrice de transition est présentable dans la forme annoncée.
\end{proof}

\subsection{Conclusion}
On rappelle que $\R\D$ est une composante pseudo-connexe de la partie réelle du diviseur irréductible de la frontière $D(A,B,d',d'')$ tel que $|A|=2d'+\ell$.

\begin{prop}\label{propfin}Soit $d$ un entier supérieur à $3$. Une composante pseudo-connexe $\R \D$ de la partie réelle de la frontière  participe à représenter (dualement) la première classe de Stiefel-Whitney de $\RMc$ si et seulement si $\ell \equiv 0 \mod (2)$.
\end{prop}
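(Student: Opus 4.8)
Le plan est de conclure à partir des calculs de déterminants des matrices de transition établies aux propositions \ref{matnonnul} et \ref{matnul}. La stratégie générale, rappelée dans l'introduction et mise en \oe uvre section \ref{trav}, consiste à dire que la composante pseudo-connexe $\R\D$ participe à représenter (dualement) $w_1(\RMc)$ si et seulement si le transport d'une orientation locale le long du chemin de référence $\gamma$, traversant transversalement $\R\D$ au seul point $\mapr$, renverse cette orientation — autrement dit si et seulement si $\epsilon(\R\D)=1$ au sens du paragraphe \ref{evalbase}. Or, par construction du chemin de comparaison $\tilde\gamma$ (paragraphe \ref{chemintd}) qui reste, lui, dans le lieu régulier du morphisme d'évaluation et le long duquel l'orientation tirée-en-arrière de $\mathfrak{o}_{\RP^3}$ est bien définie, comparer les orientations issues de $\mathcal{B}_0$ et de $\mathcal{B}_1$ revient à comparer $d_{\gamma(1)}\Rev(\mathcal{B}_1)$ à la base $\mathcal{V}_1$ de $T_{\gamma_*(1)}(\RP^3)^{2d}$: le signe cherché $\epsilon(\R\D)$ est précisément la classe modulo $2$ de l'exposant du signe du déterminant de la matrice $\mathcal{B}_1|_{\mathcal{V}_1}$.

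Il reste donc à calculer ce déterminant dans les deux cas. Dans le cas $d'\neq 0$, la proposition \ref{matnonnul} donne une matrice semblable à une matrice diagonale par blocs dont les facteurs non triviaux sont $-I_{\ell}$ et le bloc de permutation cyclique par paires $\mathcal{J}_{\ell}$ (les autres blocs étant des identités). On a $\det(-I_{\ell})=(-1)^{\ell}$. Le bloc $\mathcal{J}_{\ell}$ est la matrice (de taille $2\ell \times 2\ell$) qui renverse l'ordre de $\ell$ paires de vecteurs, donc agit comme le produit de $\lfloor \ell/2\rfloor$ transpositions de paires de $\R^2$; chaque transposition de deux paires contribue par $(-1)^{4}=+1$ au déterminant, de sorte que $\det \mathcal{J}_{\ell}=+1$. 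Au total $\det \mathcal{B}_1|_{\mathcal{V}_1}=(-1)^{\ell}$, d'où $\epsilon(\R\D)\equiv \ell \pmod 2$, c'est-à-dire $\epsilon(\R\D)=0$ si et seulement si $\ell\equiv 0 \pmod 2$. Le cas $d'=0$ se traite à l'identique: la proposition \ref{matnul} fournit une matrice semblable de même structure (blocs $-I_{\ell}$, $I_{2d-\ell}$, $\mathcal{J}_{\ell}$, $I_{2(2d-\ell)}$), dont le déterminant vaut encore $(-1)^{\ell}$, et l'on conclut de la même façon.

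Le point délicat est moins le calcul de déterminant — purement combinatoire — que la justification du lien entre $\epsilon(\R\D)$ et ce déterminant, c'est-à-dire la vérification que les choix de trivialisations (paragraphes \ref{trivial}, \ref{trivRX}, \ref{orient0}, \ref{trivX0}) et les homotopies de bases (lemmes \ref{turnlemmaP}, \ref{basa}, \ref{signarrivee}) se recollent effectivement pour donner une orientation cohérente le long de $\gamma$ en dehors de $\R K^d$; mais tout ceci est déjà contenu dans les lemmes et propositions de la section \ref{trav} que l'on invoque. Il faut enfin remarquer que le signe obtenu ne dépend que de $\ell$, donc uniquement du diviseur irréductible $D(A,B;d',d'')$ contenant $\R\D$ via $|A|=2d'+\ell$, et non du choix particulier de la composante pseudo-connexe ni de l'orientation auxiliaire $\mathfrak{o}_{\R A_0}$ — ce qui achève de montrer que $\R\D$ participe si et seulement si $\ell$ est pair, et légitime l'énoncé du théorème \ref{theo} où la composante $\R\Kl$ apparaît avec le coefficient $\ell$.
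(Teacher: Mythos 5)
Votre d\'emarche est exactement celle du texte : tout repose sur le calcul des d\'eterminants des matrices de transition des propositions \ref{matnonnul} et \ref{matnul}, et ce calcul est correct. On a bien $\det(-I_{\ell})=(-1)^{\ell}$ et $\det\mathcal{J}_{\ell}=+1$, la permutation sous-jacente \'echangeant des blocs de taille deux ; pour le cas $d'\neq0$ il faut encore, comme le fait le texte, observer que le passage de la forme anti-diagonale par blocs \`a la forme diagonale par blocs est une permutation paire des colonnes --- ce que votre \og matrice semblable\fg\ recouvre implicitement, puisque la conjugaison par la matrice de permutation correspondante pr\'eserve le d\'eterminant. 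Le d\'eterminant vaut donc $(-1)^{\ell}$ dans les deux cas, comme dans la d\'emonstration du texte.

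En revanche, votre derni\`ere phrase contredit vos propres pr\'emisses. Vous posez, \`a juste titre, que $\R\D$ participe \`a repr\'esenter dualement $w_1(\RMc)$ si et seulement si l'orientation se renverse le long du chemin $\gamma$, c'est-\`a-dire si et seulement si $\epsilon(\R\D)=1$ ; vous \'etablissez ensuite $\epsilon(\R\D)\equiv\ell\pmod 2$ ; il en r\'esulte que $\R\D$ participe si et seulement si $\ell$ est \emph{impair}, et non pair comme vous concluez. C'est du reste la seule parit\'e compatible avec le th\'eor\`eme \ref{theo}, o\`u le terme $\ell.[\RKl]^{\vee}$ est non nul dans $H^{1}(\RMc,\Z/2\Z)$ exactement lorsque $\ell$ est impair. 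L'\'enonc\'e m\^eme de la proposition porte cette ambigu\"it\'e de signe, et la d\'emonstration du texte s'arr\^ete au calcul du d\'eterminant sans expliciter le dernier pas ; mais une r\'edaction qui, comme la v\^otre, explicite le lien entre participation et $\epsilon$ ne peut pas aboutir \`a \og $\ell$ pair\fg\ sans renverser ce lien quelque part. Il faut donc soit corriger votre derni\`ere phrase en \og $\ell$ impair\fg, soit justifier une convention inverse pour $\epsilon$, ce que vous ne faites pas.
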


\begin{proof}
C'est le résultat du calcul du déterminant des matrices de transitions définies dans les propositions précédentes. Pour le cas $d'=0$ il est immédiat que ce déterminant vaut $(-1)^{\ell}$ car $\det \mathcal{J}=1$. Dans le cas où $d'$ n'est pas nul, on observe que $$\det \left(\begin{array}{c|c|c|c} ±I & 0 & 0 & 0 \\ \hline 0 & 0 & \mathcal{J} & 0\\ \hline 0 & I_{2(2d''-\ell)} & 0 & 0 \\ \hline 0 & 0 & 0 & I \end{array}\right)=\det \left(\begin{array}{c|c|c|c} ±I & 0 & 0 & 0 \\ \hline 0 & \mathcal{J} & 0 & 0\\ \hline 0 & 0 & I_{2(2d''-\ell)} & 0 \\ \hline 0 & 0 & 0 & I \end{array}\right)$$ puisque l'ordre de la permutation des colonnes associée est toujours pair. Donc le déterminant de la matrice de la proposition \ref{matnonnul} est $(-1)^{\ell}$.
\end{proof}

La proposition \ref{propfin} achève la démonstration du théorème \ref{theo}.

\end{document}